\numberwithin{equation}{section}
\newcommand{\beq}{\begin{equation}}
\newcommand{\eeq}{\end{equation}}
\newcommand{\bea}{\begin{eqnarray}}
\newcommand{\eea}{\end{eqnarray}}
\newcommand{\beas}{\begin{eqnarray*}}
\newcommand{\eeas}{\end{eqnarray*}}
\newtheorem{theorem}{Theorem}[section]
\newtheorem{definition}[theorem]{Definition}
\newtheorem{proposition}[theorem]{Proposition}
\newtheorem{corollary}[theorem]{Corollary}
\newtheorem{remark}[theorem]{Remark}
\newtheorem{example}[theorem]{Example}
\newtheorem{examples}[theorem]{Examples}
\newtheorem{foo}[theorem]{Remarks}
\newcommand{\bG}{\mathbb G}
\newcommand{\p}{\partial}
\newcommand{\bM}{\mathbb M}
\newcommand{\Ho}{\mathcal H}
\newcommand{\M}{\mathbb M}
\newcommand{\R}{\mathbb R}
\newcommand{\ve}{\varepsilon}
\title[A sub-Riemannian curvature-dimension inequality, etc.]{A sub-Riemannian curvature-dimension inequality, volume doubling property and the Poincar\'e inequality}
\author{Fabrice Baudoin}
\address{Department of Mathematics\\Purdue University \\
West Lafayette, IN 47907} \email[Fabrice Baudoin]{fbaudoin@math.purdue.edu}
\thanks{First author supported in part by
NSF Grant DMS 0907326}
\author{Michel Bonnefont}
\address{Institut de Math\'ematiques de Toulouse
\\Universit\'e Paul Sabatier \\
31062 Toulouse Cedex 9, France} \email[Michel Bonnefont]{michel.bonnefont@math.univ-toulouse.fr}
\author{Nicola Garofalo}
\address{Department of Mathematics
\\The Ohio State University\\ 
100 Math Tower\\ 231 West 18th Avenue\\ Columbus, OH 43210-1174} 
\email[Nicola
Garofalo]{rembrandt54@gmail.com}
\thanks{Third author supported in part by NSF Grant DMS-1001317}
\begin{document}
\maketitle

\begin{abstract}
Let $\M$ be a smooth connected manifold endowed with a smooth measure $\mu$ and a smooth locally subelliptic diffusion operator $L$ satisfying $L1=0$, and which is symmetric with respect to $\mu$. We show that if $L$ satisfies, with a non negative curvature parameter,  the generalized curvature inequality introduced by the first and third named authors in \cite{BG}, then the following properties hold:

\

\begin{itemize}
\item The volume doubling property;
\item The Poincar\'e inequality;
\item The parabolic Harnack inequality.
\end{itemize}

\

The key ingredient is the study of dimension dependent reverse log-Sobolev inequalities for the heat semigroup and corresponding non-linear reverse Harnack type inequalities. Our results apply in particular to all Sasakian manifolds whose horizontal Webster-Tanaka-Ricci curvature is nonnegative, all Carnot groups of step two, and to wide subclasses of principal bundles over Riemannian manifolds whose Ricci curvature is nonnegative. 
\end{abstract}

\tableofcontents

\section{Introduction}\label{S:intro}

A fundamental property of a measure metric space $(X,d,\mu)$ is the so-called \emph{doubling condition} stating that for every $x\in X$ and every $r>0$ one has
\begin{equation}\label{gdc}
\mu(B(x,2r))\le C_d\ \mu(B(x,r)),
\end{equation}
for some constant $C_d>0$, where $B(x,r) = \{y\in X\mid d(y,x)<r\}$. As it is well-known, such property is central for the validity of covering theorems of Vitali-Wiener type, maximal function estimates, and it represents one of the central ingredients in the development of analysis and geometry on metric measure spaces, see for instance \cite{Fe}, \cite{FS}, \cite{CW},  \cite{HK}, \cite{He}, \cite{Ha}, \cite{AT}.
Another fundamental property is  the \emph{Poincar\'e inequality} which claims the existence of  constants $C_p>0$ and $a\ge 1$  such that for every Lipschitz function $f$ on $B(x,ar)$ one has
\begin{equation}\label{gpi}
\int_{B(x,r)} |f - f_B|^2 d\mu \le C_p r^2 \int_{B(x,ar)} g^2 d\mu,
\end{equation}
where we have let $f_B = \mu(B)^{-1} \int_B f d\mu$, with $B = B(x,r)$. In the right-hand side of \eqref{gpi} the function $g$ denotes an \emph{upper gradient} for $f$ (see \cite{Che}  and \cite{HeK} for a discussion of upper gradients). 

One basic instance of a measure metric space supporting \eqref{gdc} and \eqref{gpi} is a complete $n$-dimensional Riemannian manifold $\M$ with nonnegative Ricci tensor. In such case  \eqref{gdc} follows with $C_d = 2^n$ from the Bishop-Gromov comparison theorem (see e.g. Theorem 3.10 in \cite{Cha}), whereas \eqref{gpi} was proved by Buser in \cite{Bu}, with $ a=1$ and $g = |\nabla f|$.

Beyond the classical Riemannian case two situations of considerable analytic and geometric interest are CR and sub-Riemannian manifolds. For these classes global inequalities such as \eqref{gdc} and \eqref{gpi} are mostly \emph{terra incognita}. 
The purpose of the present paper is taking a first step in filling this gap in the class of sub-Riemannian manifolds that satisfy the generalized curvature dimension inequality introduced in \cite{BG}. Our main result, Theorem \ref{T:main} below, constitutes a sub-Riemannian counterpart of the case in which Ricci $\ge 0$ (for this aspect, see e.g. Theorem \ref{T:sasakian} below). 

To introduce our results, we recall that a $n$-dimensional Riemannian manifold $\M$ with Laplacian $\Delta$ is said to satisfy the \emph{curvature-dimension inequality} CD$(\rho_1,n)$ if there exists $\rho_1\in \R$ such that for every $f\in C^\infty(\M)$ one has
\begin{equation}\label{cdr}
\Gamma_2(f) \ge \frac 1n (\Delta f)^2 + \rho_1 |\nabla f|^2,
\end{equation}
where 
\[
\Gamma_2(f) =  \frac 12 \big(\Delta |\nabla f|^2 - 2 <\nabla f,\nabla(\Delta f)>\big).
\]
This notion was introduced by Bakry and Emery in \cite{Bakry-Emery}, and it was further developed in \cite{bakry-tata}, \cite{St1}, \cite{St2}, \cite{LV}. What is remarkable about the curvature-dimension inequality \eqref{cdr} is that it holds on a Riemannian manifold $\M$ if and only if Ric$ \ge \rho_1$.
It follows that such notion could be taken as an alternative characterization of Ricci lower bounds. 

This point of view was recently taken up by the first and third named authors in \cite{BG}, where a new sub-Riemannian curvature-dimension inequality was introduced. Such new inequality was shown to constitute a very robust tool for developing a Li-Yau type program in some large classes of sub-Riemannian manifolds. In the present paper we develop our program even further, and in a different direction, by proving that the generalized curvature-dimension inequality introduced in \cite{BG} can be successfully used to establish global inequalities such as \eqref{gdc} and \eqref{gpi} above.

To state our main results we now introduce the relevant framework.  We consider measure metric spaces $(\M,d,\mu)$, where $\M$ is $C^\infty$ connected manifold endowed with a $C^\infty$ measure $\mu$, and $d$ is a metric canonically associated with a $C^\infty$ second-order diffusion operator $L$ on $\M$ with real coefficients. We assume that $L$ is locally subelliptic on $\M$ in the sense of \cite{FP}, and that moreover: 
\begin{itemize}
\item[(i)] $L1=0$; 
\item[(ii)]
$\int_\bM f L g d\mu=\int_\bM g Lf d\mu$;
\item[(iii)] $\int_\bM f L f d\mu \le 0$,
\end{itemize}
for every $f , g \in C^ \infty_0(\bM)$. The following distance is canonically associated with the operator $L$:
\begin{equation}\label{di}
d(x,y)=\sup \left\{ |f(x) -f(y) | \mid f \in  C^\infty(\bM) , \| \Gamma(f) \|_\infty \le 1 \right\},\ \ \  \ x,y \in \bM,
\end{equation}
where for a function $g$ on $\bM$ we have let $||g||_\infty = \underset{\bM}{\text{ess} \sup} |g|$.

Given the manifold $\M$ and the diffusion operator $L$,  similarly to \cite{Bakry-Emery} we consider the quadratic functional $\Gamma(f) = \Gamma(f,f)$, where
\begin{equation}\label{gamma}
\Gamma(f,g) =\frac{1}{2}(L(fg)-fLg-gLf), \quad f,g \in C^\infty(\bM),
\end{equation}
is known as \textit{le carr\'e du champ}. One should in fact think of $\Gamma(f)$ as the square of the length of the gradient of $f$ along the so-called horizontal directions. We remark that $\Gamma$ depends only on the diffusion operator $L$, and in this sense it is canonical. Notice that $\Gamma(f) \ge 0$ and that $\Gamma(1) = 0$. 

Unfortunately, in sub-Riemannian geometry the canonical bilinear form $\Gamma$ does not suffice to develop the Li-Yau program. To circumvent this obstruction, we further suppose that $\bM$ is equipped with a symmetric, first-order differential bilinear form $\Gamma^Z:C^\infty(\bM)\times C^\infty(\bM) \to C^\infty(\bM)$, satisfying
\[
\Gamma^Z(fg,h) = f\Gamma^Z(g,h) + g \Gamma^Z(f,h).
\]
We make the assumption that $\Gamma^Z(f) = \Gamma^Z(f,f) \ge 0$ (one should notice that $\Gamma^Z(1) = 0$). Roughly speaking, in a sub-Riemannian manifold $\Gamma^Z(f)$ represents the square of the length of the gradient of $f$ in the directions of the commutators. We emphasize that, in the above general formulation, the bilinear form $\Gamma^Z$ is not canonical since, unlike the form $\Gamma$, a priori it has no direct correlation to the diffusion operator $L$. The reader should however find reassuring that, in all the concrete geometric examples encompassed by the present paper, the choice of the form $\Gamma^Z$ can be shown to be, in fact, canonical.  

To clarify this important point we pause for a moment to discuss a basic class of three-dimensional models which have been analyzed in Section 2 in \cite{BG}.
Given a $\rho_1\in \R$ we consider a Lie group $\bG(\rho_1)$ whose Lie algebra $\mathfrak g$ admits a basis of generators $X, Y, Z$ satisfying the commutation relations
\begin{equation}\label{comm}
[X,Y] = Z,\ \ \ [X,Z] = - \rho_1 Y,\ \ \ [Y,Z] = \rho_1 X.
\end{equation}
The group $\bG(\rho_1)$ can be endowed with a natural CR structure $\theta$ with respect to which the Reeb vector field is given by $-Z$. A sub-Laplacian  on $\bG(\rho_1)$ with respect to such structure is thus given by $L = X^2 + Y^2$. The pseudo-hermitian Tanaka-Webster torsion of $\bG(\rho_1)$ vanishes, and thus $(\bG(\rho_1),\theta)$ is a Sasakian manifold. In the smooth manifold $\bM = \bG(\rho_1)$ with sub-Laplacian $L$ we introduce the differential forms $\Gamma$ and $\Gamma^Z$  defined by 
\[
\Gamma(f,g) = Xf Xg + Yf Yg,\ \ \ \ \ \ \ \Gamma^Z(f,g) = Zf Zg.
\]
It is worth observing that, since as we have said $-Z$ is the Reeb vector field of the CR structure $\theta$, then the above choice of $\Gamma^Z$ is canonical. It is also worth remarking at this point that for the CR manifold $(\bG(\rho_1),\theta)$ the Tanaka-Webster horizontal sectional curvature is constant and equals $\rho_1$. For instance, when $\bG$ is the $3$-dimensional Heisenberg group $\mathbb H^1$, with real coordinates $(x,y,t)$, and generators of the Lie algebra $X = \p_x-\frac y2 \p_t$, $Y=\p_y + \frac x2 \p_t$, $Z = \p_t$, then \eqref{comm} holds with $\rho_1 = 0$. In \cite{BG} two other special instances of the model CR manifold $\bG(\rho_1)$ were discussed in detail, namely $\mathbb{SU}(2)$, and $\mathbb{SL}(2,\R)$, corresponding, respectively, to the cases $\rho_1 = 1$ and $\rho_1 = -1$.

Given the first-order bilinear forms $\Gamma$ and $\Gamma^Z$ on $\bM$, we now introduce the following second-order differential forms:
\begin{equation}\label{gamma2}
\Gamma_{2}(f,g) = \frac{1}{2}\big[L\Gamma(f,g) - \Gamma(f,
Lg)-\Gamma (g,Lf)\big],
\end{equation}
\begin{equation}\label{gamma2Z}
\Gamma^Z_{2}(f,g) = \frac{1}{2}\big[L\Gamma^Z (f,g) - \Gamma^Z(f,
Lg)-\Gamma^Z (g,Lf)\big].
\end{equation}
Observe that if $\Gamma^Z\equiv 0$, then $\Gamma^Z_2 \equiv 0$ as well. As for $\Gamma$ and $\Gamma^Z$, we will use the notations  $\Gamma_2(f) = \Gamma_2(f,f)$, $\Gamma_2^Z(f) = \Gamma^Z_2(f,f)$.

The next definition, which we are taking from \cite{BG},  is the central character of this paper.

\begin{definition}[Generalized curvature-dimension inequality]\label{D:cdi}
Let  $\rho_1 \in \mathbb{R}$, $\rho_2 >0$, $\kappa \ge 0$, and $m> 0$. We say that $\bM$ satisfies the \emph{generalized curvature-dimension inequality} \emph{CD}$(\rho_1,\rho_2,\kappa,m)$ if  the inequality 
\begin{equation}\label{cdi}
\Gamma_2(f) +\nu \Gamma_2^Z(f) \ge \frac{1}{m} (Lf)^2 +\left( \rho_1 -\frac{\kappa}{\nu} \right) \Gamma(f) +\rho_2 \Gamma^Z(f)
\end{equation}
 holds for every  $f\in C^\infty(\bM)$ and every $\nu>0$.
\end{definition}

\begin{remark}
We  observe explicitly that by optimizing with respect to the parameter $\nu$ in \eqref{cdi}, we obtain the following equivalent form of the generalized curvature-dimension inequality \emph{CD}$(\rho_1,\rho_2,\kappa,m)$
\begin{equation}\label{cdii}
\Gamma_{2}(f)+2\sqrt{ \kappa \Gamma(f)  \Gamma^Z_{2}(f) }\ge\frac{1}{m}(Lf)^2+   \rho_1   \Gamma (f) + \rho_2 \Gamma^Z (f).
\end{equation}
\end{remark}

Let us also observe right-away that if $\rho_1'\ge \rho_1$, then $CD(\rho_1',\rho_2,\kappa,m) \Longrightarrow$\ CD$(\rho_1,\rho_2,\kappa,m)$. It should be clear to the reader that Definition \ref{D:cdi} constitutes a generalization of the \emph{curvature-dimension inequality} CD$(\rho_1,n)$ from Riemannian geometry \eqref{cdr}. In fact, to see that \eqref{cdi}, or equivalently \eqref{cdii}, contains \eqref{cdr} it is enough to take $L = \Delta$, $\Gamma^Z \equiv 0$, and $m = n$, and notice that \eqref{gamma} gives $\Gamma(f) = |\nabla f|^2$ (also note that in this context the distance \eqref{di} is simply the Riemannian distance on $\M$).  

In Definition \ref{D:cdi} the parameter $\rho_1$ plays a special role. For the results in this paper such parameter represents the lower bound on a sub-Riemannian generalization of the Ricci tensor, see Section 2 in \cite{BG}. Thus, $\rho_1 \ge 0$ is, in our framework, the counterpart of  the Riemannian Ric\ $\ge 0$. For this reason, when in this paper we say that $\M$ satisfies the curvature dimension inequality CD$(\rho_1,\rho_2,\kappa,m)$ with $\rho_1\ge 0$, we will routinely avoid repeating at each occurrence the sentence ``for some $\rho_2>0$, $\kappa\ge 0$ and $m>0$''. To illustrate this point we return for a moment to the Sasakian model space introduced in \eqref{comm} above, and recall the following proposition established in Section 2.2 of \cite{BG}. 

\begin{proposition}\label{P:Grho}
The sub-Laplacian $L$ on the Lie group $\mathbb{G}(\rho_1)$ satisfies the  generalized curvature-dimension inequality \emph{CD}$(\rho_1,\frac{1}{2},1,2)$.
\end{proposition}

The essential new aspect of the generalized curvature-dimension inequality CD$(\rho_1,\rho_2,\kappa,m)$ with respect to the Riemannian inequality CD$(\rho_1,n)$ in \eqref{cdr} is the presence of the a priori non-intrinsic bilinear forms $\Gamma^Z$ and $\Gamma^Z_2$. 
As in \cite{BG}, to be able to handle these non-intrinsic forms we will assume throughout the paper the following
hypothesis (H.1), (H.2) and (H.3). Even if they will not be mentioned explicitly in every individual result, the reader should assume that they are always in force:
\begin{itemize}
\item[(H.1)] There exists an increasing
sequence $h_k\in C^\infty_0(\bM)$   such that $h_k\nearrow 1$ on
$\bM$, and \[
||\Gamma (h_k)||_{\infty} +||\Gamma^Z (h_k)||_{\infty}  \to 0,\ \ \text{as} \ k\to \infty.
\]
\item[(H.2)]  
For any $f \in C^\infty(\bM)$ one has
\[
\Gamma(f, \Gamma^Z(f))=\Gamma^Z( f, \Gamma(f)).
\]
\item[(H.3)]  The heat semigroup generated by $L$, which will denoted $P_t$ throughout the paper, is stochastically complete that is, for $t \ge 0$, $P_t 1=1$ and   for every $f \in C_0^\infty(\bM)$ and $T \ge 0$, one has 
\[
\sup_{t \in [0,T]} \| \Gamma(P_t f)  \|_{ \infty}+\| \Gamma^Z(P_t f) \|_{ \infty} < +\infty.
\]

\end{itemize}

In addition to (H.1)-(H.3), throughout the paper we also assume that:
\begin{itemize}
\item[(H.4)] Given any two points $x, y\in \M$, there exist a subunit curve (in the sense of \cite{FP}), joining them.
\item[(H.5)] The metric space $(M,d)$ is complete. 
\end{itemize}

\begin{remark}\label{R:hyp}
We stress that, 
 when $\bM$ is a complete Riemannian manifold satisfying for some $\rho_1\in \R$ the Bakry-Emery curvature-dimension inequality CD$(\rho_1,n)$ in \eqref{cdr} above, with $L = \Delta$, then (H.1)-(H.5) are fulfilled. In fact, (H.2) is trivially satisfied since we can take $\Gamma^Z \equiv 0$, whereas (H.1) follows from (and it is in fact equivalent to) the completeness of $(\M,d)$. Sub-unit curves are geodesics and thus (H.4) holds. The gradient estimate for $P_t$ in (H.3) follows from the work of Bakry in \cite{BCRAS}, whereas the stochastic completeness $P_t 1 = 1$ was proved by Yau in \cite{Yau3} under the assumption \emph{Ric} $\ge \rho_1$, which was shown by Bakry to be equivalent to \emph{CD}$(\rho_1,n)$, see Proposition 6.2 in \cite{bakry-stflour}. 
\end{remark}

We note that in the geometric examples encompassed by the framework of this paper (for a detailed discussion of these examples the reader should consult Section 2 in \cite{BG}), (H.1) is equivalent to assuming that $(\M,d)$ be a complete metric space, i.e., (H.5).
The assumption (H.4) is for instance fulfilled when the operator $L$ satisfies the finite rank condition of the Chow-Rashevsky theorem, see Section \ref{SS:frame} below for a more detailed discussion. When (H.4) holds, definition \eqref{di} above provides a true distance, and the metric space $(\M,d)$ is a length-space in the sense of Gromov.  
The hypothesis (H.2) is of a geometric nature. For instance, all CR manifolds which are Sasakian satisfy it. 
It is important to mention that the hypothesis (H.3) has been shown in \cite{BG}  to be a consequence of the curvature dimension inequality  \emph{CD}$(\rho_1,\rho_2,\kappa,m)$ in the large class of sub-Riemanniann manifolds with transverse symmetries of Yang-Mills type. Such class encompasses Riemannian structures, CR Sasakian structures, and Carnot groups of step two. Therefore, the assumption (H.3) should not be seen as restrictive if we assume that the curvature dimension inequality is satisfied. We can also observe that the stochastic completeness of $P_t$ is intimately related to the volume growth of large metric balls and has been extensively studied in the litterature (see for instance \cite{Gri2} and \cite{St1})

The following is the central result of the present paper.

\begin{theorem}\label{T:main}
Suppose that the generalized curvature-dimension inequality hold for some $\rho_1 \ge 0$. Then, there exist constants $C_d, C_p>0$, depending only on $\rho_1, \rho_2, \kappa, m$, for which one has for every $x\in \M$ and every $r>0$:
\begin{equation}\label{dcsr}
\mu(B(x,2r)) \le C_d\ \mu(B(x,r));
\end{equation}
\begin{equation}\label{pisr}
\int_{B(x,r)} |f - f_B|^2 d\mu \le C_p r^2 \int_{B(x,r)} \Gamma(f)  d\mu,
\end{equation}
for every $f\in C^1(\overline B(x,r))$.
\end{theorem}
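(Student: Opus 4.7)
The plan is to derive both \eqref{dcsr} and \eqref{pisr} as consequences of sharp regularizing estimates for the heat semigroup $P_t = e^{tL}$, following the Bakry--\'Emery $\Gamma_2$-strategy adapted to the generalized curvature-dimension inequality. For $f>0$ and $T>0$ fixed, I would differentiate the time-dependent functionals
\[
\Phi_1(t) = P_t\brak{P_{T-t}f \cdot \log P_{T-t}f}, \quad \Phi_2(t) = P_t\brak{\frac{\Gamma(P_{T-t}f)}{P_{T-t}f}}, \quad \Phi_3(t) = P_t\brak{\frac{\Gamma^Z(P_{T-t}f)}{P_{T-t}f}},
\]
use the commutation identities available from (H.1) and (H.2) to compute $\Phi_i'(t)$ in terms of $\Gamma_2$ and $\Gamma_2^Z$, and then invoke \eqref{cdi} with a time-varying multiplier $\nu=\nu(t)$ chosen so that the $\Gamma^Z$-terms absorb the singular coefficient $\kappa/\nu$. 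This should produce a closed coupled system of differential inequalities that, once integrated from $0$ to $T$ (the $\rho_1\ge 0$ hypothesis guaranteeing that the $\Gamma$-contribution has the correct sign), yields a \emph{dimensional reverse log-Sobolev inequality} controlling $T\,\Gamma(\log P_T f)$ by a linear combination of $-L P_T f / P_T f$ and $T^{-1}\log(\N{f}_\infty/P_Tf)$ with a factor of order $d$ in front of the entropy term.

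From this inequality, equivalently a Li--Yau type inequality for $\log P_t f$, I would then integrate along a subunit curve joining $x$ to $y$ (such curves exist by the length-space assumption) to produce a non-linear parabolic Harnack inequality of the form
\[
P_s f(x) \le P_t f(y)^{\alpha}\exp\brak{\frac{\beta\, d(x,y)^2}{t-s}}, \qquad 0<s<t,
\]
valid for all nonnegative solutions. Iterating this Harnack inequality at the scales $r$ and $\sqrt{t}$ in the standard way produces two-sided Gaussian bounds for the heat kernel $p(x,y,t)$ of the form $p(x,y,t)\asymp \mu(B(x,\sqrt{t}))^{-1}\exp(-cd(x,y)^2/t)$.

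Once these heat kernel bounds are in place, the doubling property \eqref{dcsr} is obtained by comparing the lower bound for $p(x,x,r^2)$ with the upper bound for $p(x,x,4r^2)$, integrated against $\mathbf{1}_{B(x,2r)}$. The Poincar\'e inequality \eqref{pisr} then follows from the Harnack inequality together with \eqref{dcsr} by the classical Saloff-Coste/Jerison chaining argument, which uses the semigroup to smooth $f-f_B$ and controls the resulting difference by $\Gamma(f)$ via the reverse Poincar\'e inequality implicit in the entropy estimate above.

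The main obstacle I anticipate is the first step: finding the correct time profile $\nu(t)$ so that the coupled evolution of $(\Phi_1,\Phi_2,\Phi_3)$ actually closes and so that the resulting integrated inequality scales correctly in $T$ (hence in $r$ after parabolic rescaling). The interplay between $\rho_2>0$ and $\kappa\ge 0$ is delicate here, since one is essentially paying $\kappa/\nu$ in the horizontal direction to recover $\rho_2 \nu$ in the vertical one, and the optimal choice is what ultimately determines the effective dimension appearing in the Li--Yau inequality and, in turn, the constants in \eqref{dcsr} and \eqref{pisr}.
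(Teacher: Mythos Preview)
Your overall architecture is close to the paper's, and in particular your first block---differentiating entropy-type functionals in $t$, invoking the generalized curvature-dimension inequality with a time-dependent $\nu(t)$, and integrating to obtain a dimensional reverse log-Sobolev inequality---matches what the paper does (Theorem~\ref{P:linearizedBL} through Theorem~\ref{P:sharper-reverse-log-sob}). The Li--Yau/Harnack inequality you describe is indeed available; in fact it was already established in \cite{BG} and is quoted in the paper as Proposition~\ref{P:harnack}.

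The genuine gap is the sentence ``Iterating this Harnack inequality at the scales $r$ and $\sqrt{t}$ in the standard way produces two-sided Gaussian bounds.'' The Harnack inequality alone yields the on-diagonal \emph{upper} bound $p(x,x,t)\le C/\mu(B(x,\sqrt t))$ (this is \eqref{odub1}), but it does \emph{not} by itself give the on-diagonal lower bound $p(x,x,t)\ge c/\mu(B(x,\sqrt t))$. Harnack only compares $p(x,x,t)$ at different times; an absolute lower bound needs an anchor, and the usual anchor is precisely the doubling property you are trying to prove. So your scheme ``Harnack $\Rightarrow$ two-sided Gaussian $\Rightarrow$ doubling'' is circular at this point.

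The paper's way out of this circularity is the main new idea and is absent from your proposal: one applies the dimensional reverse log-Sobolev inequality not to smooth positive $f$ in order to get Li--Yau, but to (regularizations of) the indicator $\mathbf 1_{B(x,r)^c}$. Combined with a small-time asymptotic $\liminf_{s\to 0^+}(-s\ln P_s \mathbf 1_{B(x,r)^c}(x))\ge r^2/4$ (Proposition~\ref{P:Hino}, proved via a weighted $L^2$ maximum principle, Lemma~\ref{L:aronson}), this yields a differential inequality for $u(x,t)=\sqrt{-\ln P_t \mathbf 1_{B(x,r)^c}(x)}$ which integrates to the uniform caloric-measure estimate
\[
P_{Ar^2}(\mathbf 1_{B(x,r)})(x)\ge \tfrac12
\]
for some absolute $A>0$ (Theorem~\ref{T:estimee-P-boule}). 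This estimate, together with Cauchy--Schwarz, gives the on-diagonal lower bound \eqref{odlb}, and \emph{then} your comparison of $p(x,x,r^2)$ with $p(x,x,4r^2)$ goes through and yields doubling. After that, the route to two-sided Gaussian bounds and to the Poincar\'e inequality (via Kusuoka--Stroock / Saloff-Coste, then Jerison's self-improvement to the same ball) is indeed standard, as you say.
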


We note explicitly that the possibility of having the same ball in both sides of \eqref{pisr} is due to the above mentioned fact that $(\M,d)$ is a length-space. This follows from the assumption \eqref{S} below (which guarantees that $(\M,d)$ is a Carnot-Carath\'eodory space), and from Proposition 2.2 in \cite{DGN} (which states that every Carnot-Carath\'eodory space is a length-space). Once we know that  $(\M,d)$ is a length-space, we can follow the arguments in D. Jerison's paper \cite{J} on the local Poincar\'e inequality to replace the integral on a larger ball in the right-hand side of \eqref{pisr} with an integral on the same ball $B(x,r)$ as in the left-hand side. To put Theorem \ref{T:main} in the proper perspective we note that, besides the already cited case of a complete Riemannian manifold having Ric$\ \ge 0$, the only genuinely sub-Riemannian manifolds in which \eqref{dcsr} and \eqref{pisr} are presently known to simultaneously hold are stratified nilpotent Lie groups, aka Carnot groups, and, more in general, groups with polynomial growth. In Carnot groups the doubling condition \eqref{dcsr} follows from a simple rescaling argument based on the non-isotropic group dilations, from the group left-translations and form the fact that the push-forward to the group of the Lebesgue measure on the Lie algebra is a bi-invariant Haar measure. For more general Lie groups with polynomial growth Varopoulos gave an elementary proof of the Poincar\'e inequality \eqref{pisr} in \cite{V}. One should also see Theorem 8.2.9 in \cite{VSC}, in which the authors establish two-sided global Gaussian bounds in a Lie group with polynomial growth.  As it is well known, such bounds are equivalent to the doubling condition and the Poincar\'e inequality.

It is worth mentioning at this point that, when $L$ is a sum of square of vector fields like in H\"ormander's work on hypoellipticity \cite{H}, then a local (both in $x\in X$ and $r>0$) doubling condition was proved in the paper \cite{NSW}. In this same framework, a local version of the Poincar\'e inequality was proved by D. Jerison in \cite{J}. But no geometry is of course involved in these fundamental local results. The novelty of our work is in the global character of the estimates \eqref{dcsr} and \eqref{pisr}.

In order to elucidate some of the new geometric settings covered by this paper, we recall that one of the main motivations for the work \cite{NSW} was understanding boundary value problems coming from several complex variables and CR geometry. In connection with CR manifolds we mention that in \cite{BG} the first and third named authors proved the following result.

\begin{theorem}\label{T:sasakiani}
Let $(\bM,\theta)$ be a complete \emph{CR} manifold  with real dimension $2n+1$ and vanishing Tanaka-Webster torsion, i.e., a Sasakian manifold. If 
for every $x\in \bM$ the Tanaka-Webster Ricci tensor satisfies the bound  
\[
\emph{Ric}_x(v,v)\ \ge \rho_1|v|^2,
\]
for every horizontal vector $v\in \mathcal H_x$,
then the curvature-dimension inequality \emph{CD}$(\rho_1,\frac{n}{2},1,2n)$ holds.
\end{theorem}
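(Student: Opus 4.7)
The plan is to work in a local orthonormal horizontal frame adapted to the Sasakian structure, establish a pointwise Bochner-type identity for $\Ga_2(f)$, and then use Cauchy-Schwarz together with Young's inequality to package the resulting expression into the form required by \eqref{cdi}. Concretely, fix a local $J$-compatible frame $\{e_1,\ldots,e_{2n}\}$ of the horizontal distribution $\Ho$, together with the Reeb vector field $T$. The sub-Laplacian associated with the Tanaka-Webster connection $\nabla^{TW}$ is $L=\sum_i(\nabla^{TW}_{e_i})^2$, and I take $\Ga(f)=\sum_i(e_if)^2$, $\Ga^Z(f)=(Tf)^2$. The vanishing of the Tanaka-Webster torsion, which characterizes the Sasakian case, yields the commutation identities $[T,L]=0$ and $[T,J]=0$ that will be used repeatedly.

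The heart of the argument is a pointwise Bochner-type identity of the form
\[
\Ga_2(f)=\|\mathrm{Hess}^{\Ho}_{\mathrm{sym}} f\|^2+\tfrac{n}{2}(Tf)^2+\ri^{TW}(\nabla_\Ho f,\nabla_\Ho f)+c\sum_{i=1}^{2n}(Je_if)(e_iTf),
\]
obtained by expanding $\tfrac12 L\Ga(f)-\Ga(f,Lf)$ in the chosen frame and repeatedly invoking the Sasakian bracket relation $[e_i,Je_i]=-2T$ modulo $\Ho$, together with the curvature identities of $\nabla^{TW}$. Here $c$ is an explicit structural constant, and the term $\tfrac{n}{2}(Tf)^2$ comes from the antisymmetric part of the horizontal Hessian, which in the Sasakian setting is forced by the non-integrability of $\Ho$ to be proportional to $(Tf)\,d\theta$.

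Once this identity is in hand, Cauchy-Schwarz on a $2n\times 2n$ symmetric matrix gives $\|\mathrm{Hess}^{\Ho}_{\mathrm{sym}}f\|^2\ge(Lf)^2/(2n)$, supplying the dimensional constant $d=2n$, while the horizontal Webster-Tanaka-Ricci lower bound provides $\ri^{TW}(\nabla_\Ho f,\nabla_\Ho f)\ge\rho_1\Ga(f)$. To absorb the remaining cross term, I would use $[T,L]=0$ to derive $\Ga_2^Z(f)=\sum_i(e_iTf)^2$, and then apply Young's inequality with parameter $\nu$:
\[
\Big|c\sum_i(Je_if)(e_iTf)\Big|\le \nu\sum_i(e_iTf)^2+\frac{c^2}{4\nu}\sum_i(Je_if)^2=\nu\,\Ga_2^Z(f)+\frac{c^2}{4\nu}\Ga(f).
\]
Combining the four estimates and rearranging recovers \eqref{cdi} with $\rho_2=n/2$, $d=2n$, and $\ka=c^2/4$; the normalization $c=2$ that holds in the Sasakian case yields the stated value $\ka=1$.

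The main obstacle is the Bochner identity itself: its derivation is a delicate bookkeeping exercise in the Tanaka-Webster calculus, requiring simultaneous control of the curvature tensor of $\nabla^{TW}$, the torsion terms arising from $[e_i,e_j]$, and the brackets $[e_i,T]$. Once this algebraic identity is in place, the remainder of the argument is essentially mechanical. A secondary subtlety is pinning down the precise coefficient $\tfrac{n}{2}$ of $(Tf)^2$, which requires splitting the horizontal Hessian into symmetric and antisymmetric parts and carefully summing the antisymmetric contribution via $\langle[e_i,e_j]^{\mathcal{V}},T\rangle=-2\langle Je_i,e_j\rangle$, a relation specific to the contact Sasakian geometry.
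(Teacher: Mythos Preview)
This theorem is not proved in the present paper: it is quoted from the companion work \cite{BG} (see the sentence immediately preceding the statement), so there is no ``paper's own proof'' here to compare against. That said, your outline is the correct strategy and is essentially the one carried out in \cite{BG}: a sub-Riemannian Bochner formula for $\Gamma_2$ in a local pseudo-Hermitian frame, the trace inequality $\|\mathrm{Hess}^{\Ho}_{\mathrm{sym}}f\|^2\ge (Lf)^2/(2n)$ for the dimensional term, the commutation $[T,L]=0$ (valid precisely because the Tanaka--Webster torsion vanishes) to identify $\Gamma_2^Z(f)$ with $\Gamma(Tf)$, and Young's inequality with parameter $\nu$ to absorb the mixed term $\sum_i (Je_i f)(e_i Tf)$.

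Two small cautions on your write-up. First, you correctly flag the Bochner identity as the ``main obstacle,'' but what you have written is really a schematic rather than a proof: the actual derivation in \cite{BG} is a careful frame computation tracking all torsion and curvature contributions of the Tanaka--Webster connection, and the precise coefficients (the $n/2$ in front of $(Tf)^2$ and the $c=2$ on the cross term) are the output of that computation, not an input. Second, the relation you quote as $[e_i,Je_i]=-2T$ modulo $\Ho$ depends on a sign/normalization convention for $d\theta$; make sure your conventions are internally consistent when you pin down $\kappa=1$, since a factor of $2$ error there propagates directly into $\kappa$.
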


By combining Theorem \ref{T:main} with Theorem \ref{T:sasakiani} we obtain the following result which provides a large class of new geometric examples which are encompassed by our results, and which could not be previously covered by the existing works.

\begin{theorem}\label{T:sasakian}
Let $\M$ be a Sasakian manifold of real dimension $2n+1$. If 
for every $x\in \M$ the Tanaka-Webster Ricci tensor satisfies the bound  \emph{Ric}$_x \ge 0$, when restricted to the horizontal subbundle $\mathcal H_x$,
then there exist constants $C_d, C_p>0$, depending only on $n$, for which one has for every $x\in \M$ and every $r>0$:
\begin{equation}\label{dcsas}
\mu(B(x,2r)) \le C_d\ \mu(B(x,r));
\end{equation}
\begin{equation}\label{pisas}
\int_{B(x,r)} |f - f_B|^2 d\mu \le C_p r^2 \int_{B(x,r)} |\nabla_H f|^2 d\mu.
\end{equation}
\end{theorem}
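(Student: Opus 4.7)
The proof will be almost entirely a matter of composition: Theorem \ref{T:sasakiani} converts the horizontal Ricci lower bound into the generalized curvature-dimension inequality, and then Theorem \ref{T:main} converts that inequality into the desired doubling and Poincar\'e bounds. So my plan is to check that a Sasakian manifold fits precisely into the analytic framework of Theorem \ref{T:main}, and then to run these two theorems in series.

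\textbf{Step 1: Setting up the operator.} On a Sasakian manifold $(\M,\theta)$ of dimension $2n+1$ one has a canonical Reeb vector field and a horizontal distribution $\mathcal H$ of rank $2n$. I would take $L$ to be the horizontal sub-Laplacian associated with $\theta$, and $\mu$ to be the volume measure (Popp measure / canonical Sasakian volume), so that $L 1 = 0$, $L$ is symmetric with respect to $\mu$, and $L$ is locally subelliptic by bracket-generation. The \textit{carr\'e du champ} $\Gamma$ then satisfies $\Gamma(f) = |\nabla_H f|^2$, which is exactly the quantity appearing in \eqref{pisas}. The form $\Gamma^Z$ is chosen to be the square of the Reeb derivative, as in \cite{BG}. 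This is the setting under which Theorem \ref{T:sasakiani} was proved.

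\textbf{Step 2: Verifying the standing hypotheses.} I need to confirm that this data satisfies (i)--(iii) in the introduction, that $(\M,d)$ is a complete length space, and that (H.1) and (H.2) hold. Properties (i)--(iii) are immediate from the fact that $L$ is a symmetric non-positive subelliptic operator annihilating constants. The distance $d$ defined by \eqref{di} coincides with the Carnot--Carath\'eodory distance of the horizontal bundle, and on a Sasakian manifold any two points can be joined by a horizontal (subunit) curve by Chow--Rashevsky, which yields the length-space property. Completeness of $(\M,d)$ together with the fact that on a Sasakian manifold $\Gamma + \Gamma^Z$ agrees with the \textit{carr\'e du champ} of the Laplace--Beltrami operator of a Riemannian metric (the Webster metric), gives (H.1), as discussed in the paragraph following (H.1)--(H.2). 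Finally, (H.2) for the Sasakian $\Gamma$ and $\Gamma^Z$ is verified in \cite{BG}.

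\textbf{Step 3: Invoking the two theorems.} Applying Theorem \ref{T:sasakiani} with $\rho_1 = 0$ under the hypothesis $\mathrm{Ric}_x \ge 0$ on $\mathcal H_x$, I obtain that $\M$ satisfies the generalized curvature-dimension inequality $CD(0,n/2,1,2n)$. Since $\rho_1 = 0 \ge 0$, Theorem \ref{T:main} applies and produces constants $C_d, C_p > 0$, depending only on $(\rho_1,\rho_2,\kappa,d) = (0,n/2,1,2n)$, and hence only on $n$, for which the doubling bound \eqref{dcsr} and the Poincar\'e inequality \eqref{pisr} hold. Rewriting \eqref{pisr} with $\Gamma(f) = |\nabla_H f|^2$ gives precisely \eqref{dcsas} and \eqref{pisas}.

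\textbf{Main obstacle.} The only nontrivial work is the verification step: making sure that the analytic data canonically attached to a Sasakian structure genuinely satisfy hypotheses (H.1)--(H.2) and the length-space property, so that Theorem \ref{T:main} can be legitimately invoked. Once this bookkeeping is done, the deduction of \eqref{dcsas}--\eqref{pisas} is formal. Accordingly, this final statement will read more as a corollary than as an independent proof.
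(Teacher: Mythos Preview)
Your proposal is correct and matches the paper's own argument: the paper states explicitly that Theorem~\ref{T:sasakian} is obtained ``by combining Theorem~\ref{T:main} with Theorem~\ref{T:sasakiani}'', which is precisely your Step~3, and the verification of (H.1)--(H.2) you supply in Step~2 is what the paper defers to \cite{BG}.
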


In \eqref{pisas} we have denoted with $\nabla_Hf$ the horizontal gradient of a function $f\in C^1(\overline B(x,r))$. Concerning Theorem \ref{T:sasakian} we mention that in their recent work \cite{AL} Agrachev and Lee, with a completely different approach from us, have obtained \eqref{dcsas} and \eqref{pisas} for three-dimensional Sasakian manifolds.

Once Theorem \ref{T:main} is available, then from the work of Grigor'yan \cite{Gri} and Saloff-Coste  \cite{SC} (see also \cite{FS},  \cite{KS3}, \cite{St1}, \cite{St2}, \cite{St3}) it is well-known that, in a very general Markov setting, the conjunction of \eqref{dcsr} and \eqref{pisr} is equivalent to Gaussian lower bounds and uniform Harnack inequalities for the heat equation $L - \p_t$. For the relevant statements we refer the reader to Theorems \ref{T:gb}  and \ref{T:H} below.



Another basic result which follows from Theorem \ref{T:main} is a generalized Liouville type theorem, see Theorem \ref{T:cm} below, stating that, for any given $N\in \mathbb N$, 
\begin{equation}\label{cm}
\text{dim}\ \mathcal H_N(\M,L) < \infty,
\end{equation}
where we have indicated with $\mathcal H_N(\M,L)$ the linear space  of $L$-harmonic functions on $\M$ with polynomial growth of order $\le N$ with respect to the distance $d$.

In closing we mention that the framework of the present paper is analogous to that of the work \cite{BG}, where two of us have used the generalized curvature-dimension inequality in Definition \ref{D:cdi} to establish various global properties such as:
\begin{itemize}
\item[(i)] An a priori Li-Yau gradient estimate for solutions of the heat equation $L - \p_t$ of the form $u(x,t) = P_t f(x)$, where $P_t=e^{tL} $ is the heat semigroup associated with $L$;
\item[(ii)] A scale invariant Harnack inequality for solutions of the heat equation of the form $u = P_t f$, with $f\ge 0$;
\item[(iii)] A Liouville type theorem for solutions of $Lf = 0$ on $\M$;
\item[(iv)] Off-diagonal upper bounds for the fundamental solution of $L-\p_t$;
\item[(v)] A Bonnet-Myers compactness theorem for the metric space $(M,d)$.
\end{itemize}
 

As for the ideas involved in the proof of Theorem \ref{T:main} we mention that our approach is purely analytical and it is exclusively based on some new entropy functional inequalities for the heat semigroup. 
Our central result in the proof of Theorem \ref{T:main} is a uniform H\"older estimate of the caloric measure associated with the diffusion operator $L$. Such estimate is contained in Theorem \ref{T:estimee-P-boule} below, and it states the existence of an absolute constant $A>0$, depending only the parameters in the inequality CD$(\rho_1,\rho_2,\kappa,d)$, such that for every $x \in \mathbb{M}$, and $r>0$,
\begin{equation}\label{Pb}
P_{Ar^2} (\mathbf{1}_{B(x,r) })(x) \ge \frac{1}{2}.
\end{equation}
Here, for a set $E\subset \M$, we have denoted by $\mathbf 1_E$ its indicator function. 
Once the crucial estimate \eqref{Pb} is obtained, with the help of the Harnack inequality
\begin{equation}\label{harnack:intro}
P_s f (x) \le  P_t f(y) \left(\frac{t}{s}\right)^{\frac{D}{2}} \exp\left(
\frac{D}{m} \frac{d(x,y)^2}{4(t-s)} \right), \quad s<t,
\end{equation}
that was proved in \cite{BG} (for an explanation of the parameter $D$ see \eqref{D} below), the proofs of \eqref{dcsr}, \eqref{pisr} become fairly standard, and they rely on a powerful circle of ideas that may be found in the literature. 

The proof of \eqref{Pb} which represents the main novel contribution of the present work is rather  technical. We mention that the main building block is a dimension dependent reverse logarithmic Sobolev inequality in Proposition \ref{P:sharper-reverse-log-sob} below. We stress here that, even in the Riemannian case, which is of course encompassed by the present paper, such estimates are new and lead to some delicate reverse Harnack inequalities which constitute the key ingredients in the proof of \eqref{Pb}. Still in connection with the Riemannian case, it is perhaps worth noting that, although as we have mentioned, in this setting the inequalities \eqref{gdc}, \eqref{gpi} are of course well-known, nonetheless our approach provides a new perspective based on a systematic use of the heat semigroup. The more PDE oriented reader might in fact find somewhat surprising that one can develop the whole local regularity starting from a global object such the heat semigroup. This in a sense reverses the way one normally proceeds, starting from local solutions. 

Finally, we mention that in the recent paper \cite{BG2} two of us have obtained a purely analytical proof of \eqref{Pb} for complete Riemannian manifolds with Ric $\ge 0$. The approach in that paper, which is based on a  functional inequality much simpler than the one found in this paper, is completely different from that of Theorem \ref{T:estimee-P-boule} below and cannot be adapted to the non-Riemannian setting of the present paper. 

\section{Reverse logarithmic Sobolev inequalities for the heat semigroup}

\subsection{Framework}\label{SS:frame}

Hereafter in this paper, $\bM$ will be a $C^\infty$ connected manifold endowed with a smooth measure $\mu$ and a second-order diffusion operator $L$ on $\M$ with real coefficients,  locally subelliptic, satisfying $L1=0$ and 
\begin{equation*}
\int_\bM f L g d\mu=\int_\bM g Lf d\mu,\ \ \ \ \ \ \int_\bM f L f d\mu \le 0,
\end{equation*}
for every $f , g \in C^ \infty_0(\bM)$. 
We indicate with $\Gamma(f)$ the quadratic differential form defined by \eqref{gamma} and  denote by $d(x,y)$ the canonical distance associated with $L$ as in \eqref{di} in the introduction. 

There is another useful distance on $\M$ which in fact coincides with $d(x,y)$. Such distance is based on the notion of subunit curve introduced by Fefferman and Phong in \cite{FP}, see also \cite{JSC2}. By a result in \cite{PS}, given any point $x\in \M$ there exists an open set $x\in U\subset \M$ in which the operator $L$ can be written as 
\begin{equation}\label{locrep}
L = - \sum_{i=1}^m X^*_i X_i,
\end{equation}
where  the vector fields $X_i$ have Lipschitz continuous coefficients in $U$, and $X_i^*$ indicates the formal adjoint of $X_i$ in $L^2(\M,d\mu)$. We remark that such local representation of $L$ is not unique. A tangent vector $v\in T_x\M$ is called subunit for $L$ at $x$ if   
$v = \sum_{i=1}^m a_i X_i(x)$, with $\sum_{i=1}^m a_i^2 \le 1$. It turns out that the notion of subunit vector for $L$ at $x$ does not depend on the local representation \eqref{locrep} of $L$. A Lipschitz path $\gamma:[0,T]\to \M$ is called subunit for $L$ if $\gamma'(t)$ is subunit for $L$ at $\gamma(t)$ for a.e. $t\in [0,T]$. We then define the subunit length of $\gamma$ as $\ell_s(\gamma) = T$. Given $x, y\in \M$, we indicate with 
\[
S(x,y) =\{\gamma:[0,T]\to \M\mid \gamma\ \text{is subunit for}\ L, \gamma(0) = x,\ \gamma(T) = y\}.
\]
We remark explicitly that the assumption (H.4) in this paper can be reformulated by saying that 
\begin{equation}\label{S}
S(x,y) \not= \varnothing,\ \ \ \ \ \text{for every}\ x, y\in \M.
\end{equation}
Now,  it is easy to verify that \eqref{S} implies that for any $x,y\in \M$ one has
\begin{equation}\label{ds}
d_s(x,y) = \inf\{\ell_s(\gamma)\mid \gamma\in S(x,y)\}<\infty,
\end{equation}
and therefore \eqref{ds} defines a true distance on $\M$ (once we have the finiteness of $d_s$ the other properties defining a distance are easily verified). Furthermore, in Lemma 5.43 in \cite{CKS} it is proved that
\begin{equation}\label{dds}
d(x,y) = d_s(x,y),\ \ \ x, y\in \mathbb M.
\end{equation}
Therefore, also $d$ is a true distance on $\M$ and, in view of \eqref{dds}, we can work indifferently with either one of the distances $d$
or $d_s$. 

In closing, we mention if $L$ is in the form $L = \sum_{i=1}^m X_i^2 + X_0 $, with vector fields which are $C^\infty$ and satisfying the so-called H\"ormander's finite rank condition on the Lie algebra, then the Theorem of Chow-Rashevsky guarantees the validity of (H.4). 
If moreover $L$ has real-analytic coefficients,  then thanks to Theorem 2.2 on p.107 in \cite{De} we know that $L$ is hypoelliptic if and only if it satisfies H\"ormander's finite rank condition. Therefore, in this situation, the hypoellipticity of $L$ would guarantee the validity of (H.4). For generalizations of the cited result in \cite{De} to more general hypoelliptic operators with real-analytic coefficients, see \cite{OR}.

\subsection{Preliminary results}\label{SS:prelim}

In what follows we collect some results from \cite{BG} which will be needed in this paper. In the framework of Section \ref{SS:frame}, $L$ is essentially self-adjoint on $C^ \infty_0(\bM)$.  Due to the hypoellipticity of $L$, the function $(t,x) \rightarrow P_t f(x)$ is
smooth on $ (0,\infty) \times \mathbb{M} $ and
\[ P_t f(x)  = \int_{\mathbb M} p(x,y,t) f(y) d\mu(y),\ \ \ f\in
C^\infty_0(\mathbb M),\] where $p(x,y,t) = p(y,x,t) > 0$ is the so-called heat
kernel associated to $P_t$.

Henceforth in this paper we denote
\[
C^\infty_b(\bM) = C^\infty(\bM) \cap L^\infty(\bM).
\]
For $\varepsilon>0$ we also denote by $\mathcal{A}_\varepsilon$ the set of functions $f \in C^\infty_b(\M)$ such that
\[
f=g+\varepsilon,
\]
for some $\varepsilon >0$ and some $g \in C^\infty_b(\M)$, $g \ge 0$, such that $g, \sqrt{\Gamma(g)}, \sqrt{\Gamma^Z(g)} \in L^2(\bM)$. As shown in \cite{BG}, this set is stable under the action of $P_t$, i.e.,  if  $f\in \mathcal{A}_\varepsilon$, then $P_t f \in  \mathcal{A}_\varepsilon$.

Let us fix $x\in \bM$ and $T>0$. Given a function $f\in \mathcal A_\varepsilon$, for $0\le t\le T$ we introduce the \emph{entropy functionals}
\[
\Phi_1 (t)=P_t \left( (P_{T-t} f) \Gamma (\ln P_{T-t}f) \right)(x),
\]
\[
\Phi_2 (t)=P_t \left( (P_{T-t} f) \Gamma^Z (\ln P_{T-t}f) \right)(x).
\]
For later use, we observe here that
\[
\frac{d}{dt} P_t \left( (P_{T-t} f) \ln P_{T-t}f  \right)(x)=P_t \left( (P_{T-t} f) \Gamma (\ln P_{T-t}f) \right)(x)=\Phi_1 (t),
\]
and thus, with the above notations,
\begin{equation}\label{antider}
\int_0^T\Phi_1(t) dt=P_T ( f\ln f )(x) -P_Tf(x) \ln P_T f(x).
\end{equation}
For the sake of brevity, we will often omit reference to the point $x\in \M$, and write for instance  $P_T f$ instead of $P_T f(x)$. This should cause no confusion in the reader.

The main source of the functional inequalities that will be studied in the present work is the following result that was proved in \cite{BG}:

\begin{theorem}\label{P:linearizedBL}
Let $a, b : [0,T]\to [0,\infty)$ and $\gamma : [0,T] \rightarrow \mathbb{R}$ be $C^1$ functions. For $\varepsilon>0$ and $f \in \mathcal{A}_\varepsilon$, we have 
\begin{align*}
& a(T) P_T \left(  f \Gamma (\ln f) \right) +b(T) P_T \left(  f \Gamma^Z (\ln f) \right)-a(0)(P_{T} f) \Gamma (\ln P_{T}f)-b(0)(P_{T} f) \Gamma^Z (\ln P_{T}f) \\
 \ge  & \int_0^T \left(a'+2\rho_1 a -2\kappa \frac{a^2}{b}-4\frac{a\gamma}{m} \right)\Phi_1 ds +\int_0^T(b'+2\rho_2 a) \Phi_2  ds
 \\
 & +\frac{4}{m}\int_0^T a\gamma ds\ LP_{T} f -\frac{2}{m} \int_0^T a\gamma^2 ds\ P_T f.
\end{align*}

\end{theorem}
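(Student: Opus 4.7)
The plan is to differentiate the quantity $J(t) = a(t)\Phi_1(t) + b(t)\Phi_2(t)$ along $t \in [0,T]$, bound $J'(t)$ from below pointwise using the generalized curvature-dimension inequality (H.3), and then integrate. Since $J(T) - J(0)$ is precisely the left-hand side of the asserted inequality, it suffices to establish a pointwise bound on $J'(t)$ that reproduces the four integrands after reorganization.

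The first task is to compute the derivatives of the entropy functionals themselves. Setting $g = P_{T-t}f$, so that $\partial_t g = -Lg$, and using the semigroup identity $\frac{d}{dt}P_t(u(t)) = P_t(Lu(t)+\partial_t u(t))$, together with the product rule $L(uv) = uLv + vLu + 2\Gamma(u,v)$ and the identity $L\ln g = Lg/g - \Gamma(\ln g)$, a direct calculation yields
\[
\Phi_1'(t) = 2 P_t\bigl(g\, \Gamma_2(\ln g)\bigr),\qquad \Phi_2'(t) = 2 P_t\bigl(g\, \Gamma_2^Z(\ln g)\bigr).
\]
The first identity uses $\Gamma(g,\Gamma(\ln g)) = g\Gamma(\ln g,\Gamma(\ln g))$ to kill the cross terms; the second one requires exactly hypothesis (H.2), since the cancellation needs $\Gamma(\ln g,\Gamma^Z(\ln g)) = \Gamma^Z(\ln g,\Gamma(\ln g))$.

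Next, I apply the CD inequality (H.3) with the function $\ln g$ and the positive parameter $\nu(t) = b(t)/a(t)$, multiply through by the non-negative factor $g$, and integrate against $P_t$ to obtain
\[
\frac{a}{2}\Phi_1'(t) + \frac{b}{2}\Phi_2'(t) \;\ge\; \frac{a}{d}\,P_t\bigl(g(L\ln g)^2\bigr) + a\bigl(\rho_1 - \tfrac{\kappa a}{b}\bigr)\Phi_1(t) + a\rho_2\,\Phi_2(t).
\]
Here I used that $P_t(g\,\Gamma(\ln g)) = \Phi_1$ and $P_t(g\,\Gamma^Z(\ln g)) = \Phi_2$ by definition. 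To eliminate the inconvenient term $P_t(g(L\ln g)^2)$ I invoke the elementary quadratic bound $(L\ln g)^2 \ge 2\gamma\, L\ln g - \gamma^2$, valid for any real $\gamma=\gamma(t)$. Multiplying by $g$ and integrating against $P_t$, and then using the identities $gL\ln g = Lg - g\,\Gamma(\ln g)$, the commutation $P_t(Lg) = L P_t g = L P_T f$, and stochastic completeness $P_t(g) = P_Tf$, converts this term into $\frac{4a\gamma}{d}\,LP_Tf - \frac{4a\gamma}{d}\Phi_1 - \frac{2a\gamma^2}{d}\,P_Tf$. Combining with the $a'\Phi_1 + b'\Phi_2$ contribution coming from the product rule in $J'(t)$, and integrating from $0$ to $T$, delivers exactly the stated inequality once one recognizes $\Phi_1(T) = P_T(f\Gamma(\ln f))$, $\Phi_1(0) = (P_Tf)\Gamma(\ln P_Tf)$, and likewise for $\Phi_2$.

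The main obstacle is not conceptual but technical: all the manipulations above act on $\ln g$, whose smoothness and boundedness are not automatic, and one must also justify differentiation under $P_t$, the commutation $L P_t = P_t L$, and vanishing of various boundary terms. This is precisely the reason for restricting $f$ to the class $\mathcal{A}_\varepsilon$: the strict lower bound $f \ge \varepsilon > 0$ is propagated by the semigroup, so $\ln P_{T-t}f$ is smooth and bounded, while the $L^2$ control on $\sqrt{\Gamma(g)}$ and $\sqrt{\Gamma^Z(g)}$ together with hypothesis (H.1) permits the integration-by-parts and exchange-of-limits arguments developed in \cite{BG}. Granting those justifications, the proof reduces to the algebraic identities outlined above.
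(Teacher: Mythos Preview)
Your argument is correct and follows essentially the same route as the paper: introduce the functional $a\Phi_1+b\Phi_2$, differentiate it, apply CD$(\rho_1,\rho_2,\kappa,d)$ with $\nu=b/a$, use the quadratic bound $(L\ln g)^2\ge 2\gamma L\ln g-\gamma^2$, and integrate. You even supply details the paper suppresses, such as the explicit computation of $\Phi_1',\Phi_2'$ and the role of (H.2) in the latter.
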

Henceforth in this paper, we let
\begin{equation}\label{D}
D=\left( 1+ \frac{3\kappa}{2 \rho_2} \right) m.
\end{equation}

The following  scale invariant Harnack inequality for the heat kernel was also  proved in \cite{BG}.

\begin{proposition}\label{P:harnack}
Let $p(x,y,t)$ be the heat kernel on $\bM$. For every $x,y, z\in
\bM$ and every $0<s<t<\infty$ one has
\[
p(x,y,s) \le p(x,z,t) \left(\frac{t}{s}\right)^{\frac{D}{2}}
\exp\left(\frac{D}{m} \frac{d(y,z)^2}{4(t-s)} \right).
\]
\end{proposition}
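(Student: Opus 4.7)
The plan is to deduce this three-point heat kernel Harnack inequality from a Li-Yau type pointwise gradient estimate, then integrate the latter along a subunit curve and transfer the result to the heat kernel by the semigroup property.

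The first step is to extract from Theorem \ref{P:linearizedBL} a pointwise Li-Yau inequality of the form
\[
\Gamma(\ln P_\tau f)(x) \le \frac{D}{d}\,\frac{L P_\tau f(x)}{P_\tau f(x)} + \frac{D^2}{2d\,\tau},
\]
valid for $f \in \mathcal A_\varepsilon$ and $\tau > 0$. One does this by applying Theorem \ref{P:linearizedBL} with $a(s), b(s)$ of the form $a(s) = (1-s/\tau)^k$, $b(s) = c\,(1-s/\tau)^{k+1}$ and a suitably chosen constant $\gamma$, then selecting the free parameters so that, using $\rho_1 \ge 0$, the coefficient of $\Phi_1$ is non-negative while the coefficient of $\Phi_2$ vanishes. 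The coupling between $-2\kappa a^2/b$ and $2\rho_2 a$ fixes $c$, and this is what forces the constant $D = (1 + 3\kappa/(2\rho_2))d$ from \eqref{D} to appear in the estimate.

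The second step is to integrate this estimate along a subunit curve. Fix $x \in \M$, a positive $f$, and set $u(y,\tau) = P_\tau f(y)$. Since $d = d_s$ on $\M$, for $0 < s < t$ and $y, z \in \M$ one can choose an almost length-minimizing subunit curve joining $y$ and $z$ and reparametrize it as $\sigma:[s,t] \to \M$ at constant subunit speed $d(y,z)/(t-s)$. Setting $\phi(\tau) = \ln u(\sigma(\tau), \tau)$ and differentiating gives
\[
\phi'(\tau) \ge \frac{Lu}{u}(\sigma(\tau),\tau) - \sqrt{\Gamma(\ln u)(\sigma(\tau), \tau)}\,\frac{d(y,z)}{t-s}.
\]
Applying Young's inequality $ab \le \frac{d}{D}\,a^2 + \frac{D}{4d}\,b^2$ to the cross term and inserting the Li-Yau estimate just derived, the $Lu/u$ terms cancel exactly, leaving
\[
\phi'(\tau) \ge -\frac{D}{2\tau} - \frac{D}{d}\,\frac{d(y,z)^2}{4(t-s)^2}.
\]
Integrating from $s$ to $t$ yields the claimed Harnack inequality for the semigroup at the level of $P_\tau f$.

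Finally, to pass to the heat kernel one applies the semigroup Harnack inequality to $f_\eta = p(x,\cdot,\eta)$ for small $\eta > 0$. By the semigroup property and symmetry, $P_\tau f_\eta(w) = p(x, w, \tau + \eta)$, so the inequality becomes a comparison of $p(x, y, s + \eta)$ and $p(x, z, t + \eta)$; letting $\eta \to 0$ (and relabeling) recovers the stated bound. The main technical obstacle is the first step: the coefficients of $\Phi_1$ and $\Phi_2$ in Theorem \ref{P:linearizedBL} are coupled through the $-2\kappa a^2/b$ term, and producing a Li-Yau estimate with the sharp constants $D/d$ and $D^2/(2d)$ requires a careful simultaneous choice of $a, b$ and $\gamma$. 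This is where the precise value of $D$ emerges and where the argument genuinely extends beyond the Riemannian case.
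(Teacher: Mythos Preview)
The paper does not give its own proof of Proposition \ref{P:harnack}; it simply states the result and cites \cite{BG}, where the inequality was originally established. Your outline is exactly the strategy used in \cite{BG}: one first derives from the generalized curvature-dimension inequality (via the functional in Theorem \ref{P:linearizedBL}) a Li--Yau type estimate of the form
\[
\Gamma(\ln P_\tau f) + (\text{nonnegative}\ \Gamma^Z\ \text{term}) \le \frac{D}{d}\,\frac{L P_\tau f}{P_\tau f} + \frac{D^2}{2d\,\tau},
\]
then integrates along an almost minimizing subunit curve (using $d=d_s$) and completes the square to obtain the semigroup Harnack inequality \eqref{harnack:intro}, and finally transfers it to the heat kernel by taking $f = p(x,\cdot,\eta)$ and letting $\eta\to 0$.

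Two small remarks on your sketch. First, in \cite{BG} the function $\gamma$ is not constant; to make the coefficient of $\Phi_1$ nonnegative with $a,b$ vanishing at $T$ one is forced to take $\gamma(s)$ of order $(T-s)^{-1}$, and it is the integrals $\int_0^T a\gamma$ and $\int_0^T a\gamma^2$ that produce the factors $D/d$ and $D^2/(2d)$. Second, the Li--Yau inequality coming out of Theorem \ref{P:linearizedBL} naturally carries an extra nonnegative $\Gamma^Z$ term on the left; you may drop it before integrating along the curve, so your stated form is a legitimate consequence, but it is worth being aware that the ``raw'' inequality from the method involves both $\Gamma$ and $\Gamma^Z$.
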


A basic consequence of this Harnack inequality is the control of the  volume growth of balls centered at a given point.
\begin{proposition}\label{P:vg}
For every $x \in \bM$ and every $R_0 >0$ there is a constant
$C(m,\kappa,\rho_2)>0$ such that, 
\[
\mu \left( B(x,R)\right) \le \frac{C(m,\kappa,\rho_2)}{R_0^D
p(x,x,R_0^2)} R^D, \quad\ \  R \ge R_0.
\]
\end{proposition}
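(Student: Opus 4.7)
The plan is to deduce the estimate directly from the scale invariant Harnack inequality (Proposition \ref{P:harnack}) together with the stochastic completeness $\int_{\M} p(x,y,t)\,d\mu(y) = 1$ recalled at the start of Section \ref{SS:prelim}. The strategy is to pick a time $t=t(R,R_0)$ for which Harnack yields a uniform lower bound on $p(x,y,t)$ over all $y\in B(x,R)$; integrating this lower bound against $\mu$ and comparing to the total mass $1$ then immediately produces the desired polynomial upper bound on $\mu(B(x,R))$.

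Concretely, I fix $x\in\M$ and $R\ge R_0$, and apply Proposition \ref{P:harnack} with first argument $x$, second arguments $x$ and $y$, and times $s=R_0^2 < t = R_0^2+R^2$. For any $y\in B(x,R)$ one has $d(x,y)<R$, and the Harnack inequality reads
\[
p(x,x,R_0^2) \le p(x,y,R_0^2+R^2)\left(\frac{R_0^2+R^2}{R_0^2}\right)^{D/2}\exp\left(\frac{D}{d}\cdot\frac{R^2}{4R^2}\right).
\]
Since $R\ge R_0$, the ratio $(R_0^2+R^2)/R_0^2$ is bounded above by $2R^2/R_0^2$, while the exponential factor collapses to the absolute constant $e^{D/(4d)}$. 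Rearranging produces
\[
p(x,y,R_0^2+R^2) \;\ge\; c(d,\kappa,\rho_2)\,\frac{R_0^D}{R^D}\,p(x,x,R_0^2),
\]
with $c(d,\kappa,\rho_2) = 2^{-D/2}e^{-D/(4d)}$ and $D$ as in \eqref{D}.

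Finally, stochastic completeness gives
\[
1 = \int_\M p(x,y,R_0^2+R^2)\,d\mu(y) \;\ge\; \int_{B(x,R)} p(x,y,R_0^2+R^2)\,d\mu(y) \;\ge\; c(d,\kappa,\rho_2)\,\frac{R_0^D\,p(x,x,R_0^2)}{R^D}\,\mu(B(x,R)),
\]
and rearranging yields the claim with $C(d,\kappa,\rho_2)=1/c(d,\kappa,\rho_2)$. The one place where a genuine choice is made is the comparison time $t=R_0^2+R^2$, tuned so that the Gaussian factor in Harnack reduces to an absolute constant while the polynomial prefactor scales exactly like $(R/R_0)^D$; the addition of $R_0^2$ ensures $t>R_0^2$ even in the borderline case $R=R_0$. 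Beyond this, the argument is a one-shot application of Proposition \ref{P:harnack} and the total-mass identity, so I do not foresee any serious obstacle.
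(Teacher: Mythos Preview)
Your argument is correct and follows essentially the same approach as the paper: use the Harnack inequality of Proposition \ref{P:harnack} to obtain a pointwise lower bound on the heat kernel over $B(x,R)$, then integrate and invoke $P_t 1 \le 1$. The paper splits this into two separate Harnack applications (first deriving the on-diagonal bound $p(x,x,t)\le C/\mu(B(x,\sqrt t))$, then comparing $p(x,x,t)$ with $p(x,x,R_0^2)$), whereas your single comparison $p(x,x,R_0^2)\to p(x,y,R_0^2+R^2)$ with $t-s=R^2$ achieves the same result in one step.
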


\begin{proof}
Fix $x\in \bM$ and
$t>0$. Applying Proposition \ref{P:harnack} to $ p(x,y,t)$
for every $y\in B(x,\sqrt t)$ we find
\[
p(x,x,t) \le  2^{\frac{D}{2}} e^{\frac{D}{4m}}\ p(x,y,2t) =
C(m,\kappa,\rho_2) p(x,y,2t).
\]
Integration over $B(x,\sqrt t)$ gives
\[
p(x,x,t)\mu(B(x,\sqrt t)) \le C(m,\kappa,\rho_2) \int_{B(x,\sqrt
t)}p(x,y,2t)d\mu(y) \le C(m,\kappa,\rho_2),
\]
where we have used $P_t1\le 1$. This gives the on-diagonal upper
bound
 \begin{equation}\label{odub1} p(x,x,t) \le
\frac{C(m,\kappa,\rho_2)}{\mu(B(x,\sqrt t))}.
\end{equation}

Let now $t>\tau >0$. Again,  from the Harnack inequality of Proposition
\ref{P:harnack}, we have
\[
p(x,x,t) \ge p(x,x,\tau) \left( \frac{\tau}{t} \right)^{\frac{D}{2}}.
\]
The inequality (\ref{odub1}) finally implies the desired conclusion by taking $t=R^2$ and $\tau=R_0^2$.
\end{proof}

\subsection{Reverse logarithmic Sobolev inequalities}\label{S:rlogsob}

In this section we derive some functional inequalities which will play a fundamental role in the proof of Theorem \ref{T:doubling} below.


\begin{proposition}\label{P:main}
Let $\varepsilon>0$ and $f \in \mathcal A_\varepsilon$ . For $x\in \M$,  $t, \tau >0$, and $C \in \mathbb{R}$, one has
\begin{align*}
& \frac{\tau}{\rho_2} P_t (f \Gamma(\ln f))(x) +\tau^2 P_t (f \Gamma^Z (\ln f))(x)
+ \frac{1}{\rho_2} \left( 1+\frac{2\kappa}{\rho_2} +\frac{4C}{m}\right) \bigg[P_t ( f\ln f )(x) - P_tf(x) \ln P_t f(x)\bigg]  \\
\ge &\frac{t+\tau}{\rho_2} P_t f(x) \Gamma(\ln P_t f)(x) +(t+\tau)^2 P_t f(x) \Gamma^Z(\ln P_t f)(x) + \frac{4Ct}{\rho_2 m}  LP_t f(x) -\frac{2C^2}{m \rho_2} \ln \left( 1+\frac{t}{\tau}\right) P_t f(x).
\end{align*}
\end{proposition}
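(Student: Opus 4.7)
The plan is to apply Theorem \ref{P:linearizedBL} with a specific and carefully tuned choice of the auxiliary functions $a(s)$, $b(s)$, $\gamma(s)$ on the interval $[0,t]$, so that every term on the right-hand side of that master inequality collapses to exactly what appears in the statement of Proposition \ref{P:main}. Matching the boundary data suggests the ansatz
\[
a(s)=\frac{t+\tau-s}{\rho_2},\qquad b(s)=(t+\tau-s)^2,\qquad \gamma(s)=\frac{C}{t+\tau-s},
\]
applied with time horizon $T=t$. One checks $a(0)=(t+\tau)/\rho_2$, $a(t)=\tau/\rho_2$, $b(0)=(t+\tau)^2$, $b(t)=\tau^2$, so the boundary contributions
$a(t)P_t(f\Gamma(\ln f))+b(t)P_t(f\Gamma^Z(\ln f))-a(0)(P_tf)\Gamma(\ln P_tf)-b(0)(P_tf)\Gamma^Z(\ln P_tf)$
reproduce the first and fourth terms on the LHS of Proposition \ref{P:main} together with the first and second terms on the RHS.

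Next I would verify that this ansatz causes two miraculous simplifications. First, $b'+2\rho_2 a=-2(t+\tau-s)+2\rho_2\cdot\frac{t+\tau-s}{\rho_2}=0$, so the entire $\Phi_2$ integral drops out; this is exactly why the proposition contains no $\Phi_2$-type term. Second, $a\gamma=C/\rho_2$ and $a^2/b=1/\rho_2^2$ are both independent of $s$. Consequently
\[
\frac{4}{d}\int_0^t a\gamma\,ds=\frac{4Ct}{d\rho_2},\qquad \frac{2}{d}\int_0^t a\gamma^2\,ds=\frac{2C^2}{d\rho_2}\int_0^t\frac{ds}{t+\tau-s}=\frac{2C^2}{d\rho_2}\ln\!\Big(1+\frac{t}{\tau}\Big),
\]
which produces precisely the $LP_tf$ and $P_tf$ terms on the RHS of the proposition.

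It remains to absorb the $\Phi_1$ integral. Writing out
\[
a'+2\rho_1 a-2\kappa\frac{a^2}{b}-\frac{4a\gamma}{d}=-\frac{1}{\rho_2}+2\rho_1 a-\frac{2\kappa}{\rho_2^2}-\frac{4C}{d\rho_2},
\]
and discarding the nonnegative term $2\rho_1 a$ (which is where the hypothesis $\rho_1\ge 0$ is used), this coefficient is bounded below by
\[
-\frac{1}{\rho_2}\Big(1+\frac{2\kappa}{\rho_2}+\frac{4C}{d}\Big).
\]
Since $\Phi_1\ge 0$, the integral $\int_0^t(a'+2\rho_1 a-2\kappa a^2/b-4a\gamma/d)\Phi_1\,ds$ is then bounded below by $-\frac{1}{\rho_2}(1+\frac{2\kappa}{\rho_2}+\frac{4C}{d})\int_0^t\Phi_1\,ds$, and by the antiderivative identity \eqref{antider} this equals $-\frac{1}{\rho_2}(1+\frac{2\kappa}{\rho_2}+\frac{4C}{d})[P_t(f\ln f)-P_tf\ln P_tf]$. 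Moving this term to the LHS of Theorem \ref{P:linearizedBL} yields the claim.

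The whole proof is a one-shot algebraic calibration, so there is no conceptual obstacle; the only non-routine step is discovering the ansatz. The design principle to highlight is that the exponents on $(t+\tau-s)$ in $a$ and $b$ must differ by one so that $b'+2\rho_2 a$ telescopes to zero and so that $a\gamma$ and $a^2/b$ become constant in $s$. The factor $1/\rho_2$ in $a$ is forced by this constancy together with the desired cancellations, and the inverse form of $\gamma$ is what produces the logarithm.
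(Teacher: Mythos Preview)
Your proof is correct and is essentially identical to the paper's: the same ansatz $a(s)=(T+\tau-s)/\rho_2$, $b(s)=(T+\tau-s)^2$, $\gamma(s)=C/(T+\tau-s)$ is plugged into Theorem \ref{P:linearizedBL}, yielding the same four simplifications you list. If anything, you are slightly more explicit than the paper about where the standing hypothesis $\rho_1\ge 0$ enters (to drop the $2\rho_1 a$ term from the $\Phi_1$ coefficient) and why $\Phi_1\ge 0$ then lets you pass the constant coefficient outside the integral; the paper records only the identity for $a'-2\kappa a^2/b-4a\gamma/d$ and leaves that step implicit.
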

\begin{proof}

Let $T, \tau >0$ be arbitrarily fixed. We apply Theorem \ref{P:linearizedBL} with $\rho_1=0$, in which we choose
\[
b(t)=(T+\tau-t)^2,\ \ a(t)=\frac{1}{\rho_2} (T+\tau-t),\ \ \gamma(t)=\frac{C}{T+\tau-t},\ \ 0\le t\le T.
\]
With such choices we obtain 
\begin{equation}\label{choices}
\begin{cases}
a' -2\kappa \frac{a^2}{b}-4\frac{a\gamma}{m} \equiv -\frac{1}{\rho_2}\left( 1+\frac{2\kappa}{\rho_2} +\frac{4C}{m}\right),
\\
b' + 2 \rho_2 a \equiv 0,
\\
\int_0^T \frac{4a\gamma}{m} =\frac{4CT}{\rho_2 m},
\\
\text{and}
\\
- \int_0^T  \frac{2a\gamma^2}{m} =-\frac{2C^2}{m \rho_2} \ln \left( 1+\frac{T}{\tau} \right).
\end{cases}
\end{equation}

Keeping \eqref{antider} in mind,  we  obtain the sought for conclusion with $T$ in place of $t$. The arbitrariness of $T>0$ finishes the proof.

\end{proof}

A first notable consequence of Proposition \ref{P:main} is the following reverse log-Sobolev inequality which was also observed in \cite{BB2}.

\begin{corollary}\label{P:reverse logsob}
Let $\varepsilon >0$ and $f \in \mathcal A_\varepsilon$. 
 For $x\in \M$,  $t>0$ one has
\[
t P_t f(x) \Gamma(\ln P_t f)(x)  +\rho_2 t^2  P_t f(x) \Gamma^Z(\ln P_t f)(x) \le \left(1+\frac{2\kappa}{\rho_2}\right) \big[P_t ( f\ln f )(x) -P_tf(x)\ln P_t f(x)\big].
\]
\end{corollary}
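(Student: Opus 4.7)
The plan is to derive the reverse log-Sobolev inequality as a direct specialization of Proposition \ref{P:main} by choosing the free parameter $C$ equal to $0$ and letting the auxiliary parameter $\tau$ tend to $0^+$.

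First I would set $C=0$ in Proposition \ref{P:main}. This is the only choice that kills the logarithmic singularity: the term $-\frac{2C^2}{d\rho_2}\ln(1+t/\tau) P_t f(x)$ blows up as $\tau\to 0^+$ unless $C=0$, and similarly the $\frac{4Ct}{\rho_2 d}LP_tf(x)$ term simply vanishes. With $C=0$ the inequality in Proposition \ref{P:main} becomes
\begin{align*}
& \frac{\tau}{\rho_2} P_t(f\Gamma(\ln f))(x) + \tau^2 P_t(f\Gamma^Z(\ln f))(x) + \frac{1}{\rho_2}\left(1+\frac{2\kappa}{\rho_2}\right)\bigl[P_t(f\ln f)(x)-P_tf(x)\ln P_tf(x)\bigr] \\
&\qquad \ge \frac{t+\tau}{\rho_2} P_tf(x)\Gamma(\ln P_tf)(x) + (t+\tau)^2 P_tf(x)\Gamma^Z(\ln P_tf)(x),
\end{align*}
valid for every $\tau>0$.

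Next I would let $\tau\to 0^+$. Since $f\in\mathcal{A}_\varepsilon$, we have $f\ge \varepsilon>0$ and $\Gamma(f),\Gamma^Z(f)\in L^1(\mathbb{M})$, so $f\Gamma(\ln f)=\Gamma(f)/f$ and $f\Gamma^Z(\ln f)=\Gamma^Z(f)/f$ are integrable; hence $P_t(f\Gamma(\ln f))(x)$ and $P_t(f\Gamma^Z(\ln f))(x)$ are finite, and the two terms $\frac{\tau}{\rho_2}P_t(f\Gamma(\ln f))(x)$ and $\tau^2 P_t(f\Gamma^Z(\ln f))(x)$ vanish in the limit. Passing to the limit on the right-hand side is immediate by continuity. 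Multiplying the resulting inequality through by $\rho_2$ yields exactly
\[
t P_tf(x)\Gamma(\ln P_tf)(x) + \rho_2 t^2 P_tf(x)\Gamma^Z(\ln P_tf)(x) \le \left(1+\frac{2\kappa}{\rho_2}\right)\bigl[P_t(f\ln f)(x)-P_tf(x)\ln P_tf(x)\bigr],
\]
which is the desired corollary.

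There is really no hard step here; the only point that requires mild care is the justification that the $\tau$-dependent terms on the left truly vanish in the limit, which follows from the integrability properties built into the definition of $\mathcal{A}_\varepsilon$ together with the lower bound $f\ge \varepsilon$ that keeps $1/f$ bounded. All the nontrivial analytic content---the generalized curvature-dimension inequality, the choice of the functions $a,b,\gamma$, and the use of the pointwise bound $(L\ln P_{T-t}f)^2\ge 2\gamma L\ln P_{T-t}f-\gamma^2$---has already been absorbed into Proposition \ref{P:main}.
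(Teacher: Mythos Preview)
Your proposal is correct and follows exactly the paper's own proof: the paper simply says ``We first apply Proposition~\ref{P:main} with $C=0$, and then we let $\tau\to 0^+$ in the resulting inequality.'' You have merely added a short justification for why the $\tau$-dependent terms on the left vanish in the limit, which the paper omits.
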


\begin{proof}
We first apply Proposition \ref{P:main} with $C=0$, and then we let $\tau \to 0^+$ in the resulting inequality.

\end{proof}

We may actually improve  Corollary \ref{P:reverse logsob} and obtain the following crucial dimension dependent reverse log-Sobolev inequality.

\begin{theorem}\label{P:sharper-reverse-log-sob}
Let $\varepsilon >0$ and $f \in \mathcal A_\varepsilon$,
 then for every $C \geq 0 $ and $\delta >0$, one has for $x\in \M$, 
  $t>0$,
  \begin{align}\label{P}
&   \frac{t}{\rho_2}  P_{t} f(x) \Gamma(\ln P_{t } f)(x) +t^2 P_{t} f(x) \Gamma^Z(\ln P_{t} f)(x) \\
& \le \frac{1}{\rho_2} \left( 1+\frac{2\kappa}{\rho_2} +\frac{4C}{m}\right) \big[P_{ t } (f \ln f)(x) - P_{t}f(x) \ln P_{ t} f(x) \big] 
\notag\\
& - \frac{4C}{\rho_2 m} \frac{t}{1+\delta} LP_{t } f(x)+\frac{2C^2}{m \rho_2} \ln \left( 1+\frac{1}{\delta}\right) P_{t} f(x).
\notag\end{align}
\end{theorem}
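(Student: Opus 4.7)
The plan is to invoke Theorem~\ref{P:linearizedBL} with a carefully chosen non-smooth $\gamma$ that makes the boundary terms $P_t(f\Gamma(\ln f))$ and $P_t(f\Gamma^Z(\ln f))$ drop out altogether, while producing exactly the $t/(1+\delta)$ and $\ln(1+1/\delta)$ structure on the right-hand side of \eqref{P}. Specifically, on $[0,t]$ I would take
\[
a(s) = \frac{t-s}{\rho_2}, \qquad b(s) = (t-s)^2, \qquad \gamma(s) = \frac{C}{t-s}\,\mathbf{1}_{[0,\,t/(1+\delta)]}(s).
\]
Then $a(t) = b(t) = 0$ kills the unwanted boundary contributions, while $a(0) = t/\rho_2$ and $b(0) = t^2$ are precisely the coefficients on the left of \eqref{P}, and $b' + 2\rho_2 a \equiv 0$ eliminates the $\Phi_2$ integral.

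A direct computation gives
\[
a' + 2\rho_1 a - \frac{2\kappa a^2}{b} - \frac{4a\gamma}{d} = -\frac{1}{\rho_2} + \frac{2\rho_1(t-s)}{\rho_2} - \frac{2\kappa}{\rho_2^2} - \frac{4(t-s)\gamma(s)}{\rho_2 d};
\]
since $\rho_1 \ge 0$ by hypothesis (H.3) and $(t-s)\gamma(s) \le C$ pointwise by construction, this is bounded below by $-\frac{1}{\rho_2}(1+\frac{2\kappa}{\rho_2}+\frac{4C}{d})$. Combining this with $\Phi_1 \ge 0$ and the antiderivative identity \eqref{antider} shows that the $\Phi_1$ integral contributes at least $-\frac{1}{\rho_2}(1+\frac{2\kappa}{\rho_2}+\frac{4C}{d})[P_t(f\ln f) - P_t f\ln P_t f]$. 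The remaining two integrals are elementary:
\[
\int_0^t a\gamma\,ds = \frac{Ct}{\rho_2(1+\delta)}, \qquad \int_0^t a\gamma^2\,ds = \frac{C^2}{\rho_2}\int_0^{t/(1+\delta)}\frac{ds}{t-s} = \frac{C^2}{\rho_2}\ln\!\left(1+\frac{1}{\delta}\right),
\]
and multiplying by $4/d$ and $-2/d$ respectively reproduces precisely the $LP_t f$ and $P_t f$ coefficients claimed in \eqref{P}. Substituting into Theorem~\ref{P:linearizedBL} and moving the $P_t f\Gamma(\ln P_t f)$ and $P_t f\Gamma^Z(\ln P_t f)$ terms to the left then yields \eqref{P}.

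The one technical nuisance is that $\gamma$ is not $C^1$ at $s = t/(1+\delta)$. I would handle this by mollifying $\gamma$ near the jump and passing to the limit, noting that $\gamma$ enters the proof of Theorem~\ref{P:linearizedBL} only through the pointwise quadratic bound $(L\ln P_{t-s}f)^2 \ge 2\gamma L\ln P_{t-s}f - \gamma^2$ and so requires no regularity beyond boundedness; both integrals above are continuous under such an approximation. The genuinely non-trivial step is the guess of the correct $\gamma$: the smooth ansatz $C/(t-s+\tau)$ used in Proposition~\ref{P:main} cannot simultaneously match the \emph{exact} value of $\int a\gamma\,ds$ (an equality forced by the sign ambiguity of $LP_t f$) and the upper bound on $\int a\gamma^2\,ds$ uniformly in $\delta$, and cutting $\gamma$ off on the terminal subinterval $(t/(1+\delta),t]$ is precisely what decouples these two constraints.
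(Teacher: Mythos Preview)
Your argument is correct and is genuinely different from the paper's proof. The paper proceeds in two stages: it first applies Proposition~\ref{P:main} (i.e.\ Theorem~\ref{P:linearizedBL} with the smooth choice $\gamma(s)=C/(T+\tau-s)$) to the shifted function $P_\tau f$, then bootstraps the resulting inequality using the $C=0$ case (Corollary~\ref{P:reverse logsob}) together with Jensen's inequality for $s\mapsto s\ln s$, and finally sets $\tau=\delta t$. Your single application of Theorem~\ref{P:linearizedBL} with the truncated choice $\gamma(s)=\frac{C}{t-s}\mathbf 1_{[0,t/(1+\delta)]}(s)$ collapses this two-step bootstrap into one shot: vanishing $a(t),b(t)$ removes the problematic boundary terms that the paper eliminates via Corollary~\ref{P:reverse logsob} and Jensen, while the cutoff decouples the linear constraint $\int a\gamma\,ds=\frac{Ct}{\rho_2(1+\delta)}$ from the logarithmic growth of $\int a\gamma^2\,ds$.

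What the paper's route buys is that it stays entirely inside the $C^1$ framework stated in Theorem~\ref{P:linearizedBL}, at the cost of the auxiliary Corollary~\ref{P:reverse logsob} and the Jensen step. What your route buys is directness and transparency --- no convexity argument, no iteration --- at the cost of the small observation (which you make correctly) that $\gamma$ enters the proof of Theorem~\ref{P:linearizedBL} only through the pointwise bound $(L\ln P_{t-s}f)^2\ge 2\gamma\,L\ln P_{t-s}f-\gamma^2$ and the integrability of $a\gamma$ and $a\gamma^2$, so that bounded measurable $\gamma$ suffices. Since your $\gamma$ is bounded by $C(1+\delta)/(\delta t)$ on $[0,t]$, the mollification step is routine (and, as you note, arguably unnecessary once one revisits the proof of Theorem~\ref{P:linearizedBL}).
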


\begin{proof}
For $x\in \M$, $t, \tau >0$, we apply Proposition \ref{P:main} to the function $P_\tau f$ instead of $f$. Recalling that $P_t(P_\tau f) = P_{t+\tau} f$, we obtain, for all $C\in \R$, 
\begin{align}\label{Pttau}
& \frac{\tau}{\rho_2} P_t (P_\tau f \Gamma(\ln P_\tau f))(x) +\tau^2 P_t \big(P_\tau f \Gamma^Z (\ln P_\tau  f)\big)(x)
\\
&  +\frac{1}{\rho_2} \left( 1+\frac{2\kappa}{\rho_2} +\frac{4C}{m}\right) \big[P_t ( P_\tau f\ln P_\tau f )(x) - P_{t+\tau}f(x) \ln P_{t+\tau} f(x)\big]  \notag\\
& \ge \frac{t+\tau}{\rho_2}  P_{t+\tau} f(x) \Gamma(\ln P_{t+\tau} f)(x) +(t+\tau)^2 P_{t+\tau} f(x) \Gamma^Z(\ln P_{t+\tau} f)(x) 
\notag\\
&  + \frac{4C}{\rho_2 m} t LP_{t+\tau} f(x) -\frac{2C^2}{m \rho_2} \ln \left( 1+\frac{t}{\tau}\right) P_{t+\tau} f(x).
\notag\end{align}
Invoking Proposition \ref{P:reverse logsob} we now find for every $x\in \M$, $\tau >0$,
\[
\tau P_\tau f(x) \Gamma(\ln P_\tau f)(x)  +\rho_2 \tau^2  P_\tau f(x) \Gamma^Z(\ln P_\tau f)(x) \le \left(1+\frac{2\kappa}{\rho_2}\right) \big[P_\tau( f\ln f )(x) -P_\tau f(x)\ln P_\tau f(x)\big].
\]
If we now apply $P_t$ to this inequality, we obtain
\[
\tau P_t (P_\tau f \Gamma(\ln P_\tau f))(x) +\rho_2 \tau^2 P_t (P_\tau f \Gamma^Z (\ln P_\tau  f))(x) \le \left(1+\frac{2\kappa}{\rho_2}\right) \big[P_{t+\tau} ( f\ln f )(x) -P_t(P_{\tau}f \ln P_{\tau} f)(x)\big].
\]
We use this inequality to bound from above the first two terms in the left-hand side of \eqref{Pttau}, obtaining
\begin{align*}
&\frac{1+\frac{2\kappa}{\rho_2} }{\rho_2 } P_{t+\tau}( f\ln f )(x)+\frac{4C}{\rho_2 m}P_t(P_{\tau}f \ln P_{\tau} f)(x) -\frac{1}{\rho_2} \left( 1+\frac{2\kappa}{\rho_2} +\frac{4C}{m}\right) P_{t+\tau}f(x) \ln P_{t+\tau} f(x)  \\
& \ge \frac{t+\tau}{\rho_2}  P_{t+\tau} f(x) \Gamma(\ln P_{t+\tau} f)(x) +(t+\tau)^2 P_{t+\tau} f(x) \Gamma^Z(\ln P_{t+\tau} f)(x) 
\\
& + \frac{4C}{\rho_2 m} t LP_{t+\tau} f(x) -\frac{2C^2}{m \rho_2} \ln \left( 1+\frac{t}{\tau}\right) P_{t+\tau} f(x).
\end{align*}
Consider the convex function $\Phi(s) = s\ln s$, $s>0$. Thanks to Jensen's inequality, we have for any $\tau >0$ and $x\in \M$
\[
\Phi(P_\tau f(x)) \le P_\tau(\Phi(f))(x),
\]
which we can rewrite
\[
P_\tau f(x) \ln P_\tau f(x) \le P_\tau(f \ln f)(x).
\]
For $C\geq 0$, applying $P_t$ to this inequality we find
\[
\frac{4C}{\rho_2 m}P_t(P_{\tau}f \ln P_{\tau} f)(x) \le  \frac{4C}{\rho_2 m}P_{t+\tau} (f \ln f)(x).
\]
We therefore conclude, for $C\geq 0$,
\begin{align*}
&\frac{1}{\rho_2} \left( 1+\frac{2\kappa}{\rho_2} +\frac{4C}{m}\right) \big[P_{t+\tau} (f \ln f)(x) - P_{t+\tau}f(x) \ln P_{t+\tau} f(x) \big] \\
& \ge \frac{t+\tau}{\rho_2} P_{t+\tau} f(x) \Gamma(\ln P_{t+\tau} f)(x) +(t+\tau)^2 P_{t+\tau} f(x) \Gamma^Z(\ln P_{t+\tau} f)(x)
\\
& + \frac{4C}{\rho_2 m} t LP_{t+\tau} f(x) -\frac{2C^2}{m \rho_2} \ln \left( 1+\frac{t}{\tau}\right) P_{t+\tau} f(x).
\end{align*}
If in the latter inequality we now choose $\tau =\delta t$, we find:
\begin{align*}
&\frac{1}{\rho_2} \left( 1+\frac{2\kappa}{\rho_2} +\frac{4C}{m}\right) \big[P_{t+\delta t } (f \ln f)(x) - P_{t+\delta t}f(x) \ln P_{t+\delta t} f(x) \big] \\
& \ge \frac{t+\delta t}{\rho_2} P_{t+\delta t} f(x) \Gamma(\ln P_{t+\delta t } f)(x) +(t+\delta t )^2 P_{t+\delta t} f(x) \Gamma^Z(\ln P_{t+\delta t} f)(x) 
\\
& + \frac{4C}{\rho_2 m} t LP_{t+\delta t} f(x) -\frac{2C^2}{m \rho_2} \ln \left( 1+\frac{1}{\delta}\right) P_{t+\delta t} f(x).
\end{align*}
Changing $(1+\delta)t$ into $t$ in the latter inequality, we finally conclude:
\begin{align*}
&   \frac{t}{\rho_2}  P_{t} f(x) \Gamma(\ln P_{t } f)(x) +t^2 P_{t} f(x) \Gamma^Z(\ln P_{t} f)(x) \\
& \le \frac{1}{\rho_2} \left( 1+\frac{2\kappa}{\rho_2} +\frac{4C}{m}\right) \big[P_{ t }(f \ln f)(x) - P_{t}f(x) \ln P_{ t} f(x)\big]
\\
& -\frac{4C}{\rho_2 m} \frac{t}{1+\delta} LP_{t }f(x)+\frac{2C^2}{m \rho_2} \ln \left( 1+\frac{1}{\delta}\right) P_{t} f(x).
\end{align*}
This gives the desired conclusion \eqref{P}.

\end{proof}

\section{Volume doubling property}\label{S:dc}

 Our principal objective of this section is proving the following result.

\begin{theorem}[Global doubling property]\label{T:doubling}
The metric measure space $(\mathbb{M}, d , \mu)$ satisfies the global volume doubling property. More precisely, there exists a constant $C_1 = C_1(\rho_1,\rho_2,\kappa,d)>0$ such that for every $x \in \mathbb{M}$ and every $r>0$,
\[
\mu(B(x,2r)) \le C_1 \mu(B(x,r)).
\]
\end{theorem}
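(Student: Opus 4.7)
The strategy is to reduce the doubling property to the heat-semigroup ball estimate~\eqref{Pb}, which asserts the existence of a constant $A = A(\rho_1, \rho_2, \kappa, d) > 0$ such that
\[
P_{Ar^2}(\mathbf{1}_{B(x,r)})(x) \ge \tfrac{1}{2}, \qquad x \in \M,\ r > 0,
\]
and which I will establish separately as a consequence of the dimensional reverse log-Sobolev inequality in Theorem~\ref{P:sharper-reverse-log-sob}. Granted \eqref{Pb}, doubling follows in a fairly standard way by combining it with the scale-invariant Harnack inequality of Proposition~\ref{P:harnack} and the on-diagonal upper bound $p(x,x,t) \le K_0 / \mu(B(x, \sqrt{t}))$ obtained as \eqref{odub1} in the proof of Proposition~\ref{P:vg}.

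Assuming \eqref{Pb}, I would rewrite it as $\tfrac{1}{2} \le \int_{B(x,r)} p(x,y, Ar^2)\, d\mu(y)$ and apply Proposition~\ref{P:harnack} with $s = Ar^2$, $t = 2Ar^2$, $z = x$, and $y \in B(x,r)$. Since $d(y,x) \le r$, this gives
\[
p(x, y, Ar^2) \le 2^{D/2} \exp\!\left(\frac{D}{4Ad}\right) p(x, x, 2Ar^2) =: K_1\, p(x, x, 2Ar^2).
\]
Integrating in $y$ over $B(x,r)$ yields $p(x, x, 2Ar^2) \ge (2 K_1\, \mu(B(x,r)))^{-1}$. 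Combining this with the on-diagonal upper bound at $t = 2Ar^2$, namely $p(x, x, 2Ar^2) \le K_0 / \mu(B(x, \sqrt{2A}\, r))$, yields
\[
\mu(B(x, \sqrt{2A}\, r)) \le 2 K_0 K_1\, \mu(B(x, r)),
\]
which is doubling at the single scale factor $\lambda_0 := \sqrt{2A}$. A finite iteration (choosing the least $k$ with $\lambda_0^k \ge 2$ and applying the inequality $k$ times) upgrades this to $\mu(B(x, 2r)) \le C_1\, \mu(B(x, r))$ with $C_1 = (2 K_0 K_1)^k$, depending only on $\rho_1, \rho_2, \kappa, d$.

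The genuinely hard step is of course \eqref{Pb}. The plan for it is to apply the dimensional reverse log-Sobolev inequality~\eqref{P} to a smooth bounded approximation of $\mathbf{1}_{B(x,r)}$ belonging to $\mathcal{A}_\varepsilon$ — for instance $\varepsilon + P_\eta \chi$, where $\chi$ is a smooth cut-off of $B(x,r)$ and $\eta \downarrow 0$ — and then to exploit the three free parameters $C \ge 0$, $\delta > 0$, and $t$, together with the dimensional correction term $-\frac{4C}{\rho_2 d}\frac{t}{1+\delta} LP_t f$, to convert the functional inequality into a reverse Harnack-type estimate at the basepoint $x$. Choosing the parameters at the scale $t \asymp r^2$ should force $P_{A r^2}(\mathbf{1}_{B(x,r)})(x)$ to stay bounded below by an absolute constant. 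The subtle point will be that $C$ must be chosen as a function of the (a priori unknown) quantity $P_t(\mathbf{1}_{B(x,r)})(x)$ itself, so that the $LP_t f$ term absorbs correctly and the dimension $d$ enters the final value of $A$; here the \emph{dimensional} improvement of the cruder reverse log-Sobolev inequality of Corollary~\ref{P:reverse logsob} is indispensable, and this is why the non-trivial $LP_t f$ term in~\eqref{P} had to be kept throughout the derivation.
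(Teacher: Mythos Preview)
Your reduction of doubling to the ball estimate \eqref{Pb} is morally the same as the paper's, but there is a quantitative slip in the iteration: you obtain $\mu(B(x,\lambda_0 r))\le C\,\mu(B(x,r))$ with $\lambda_0=\sqrt{2A}$ and then iterate, implicitly assuming $\lambda_0>1$. In fact the proof of \eqref{Pb} (Theorem~\ref{T:estimee-P-boule}) produces a \emph{small} constant $A$ (one lets $A\to 0^+$ in the final step), so $\lambda_0<1$ and your inequality is trivial, the iteration vacuous. The fix is immediate and is essentially what the paper does: insert one more application of the Harnack inequality of Proposition~\ref{P:harnack} to pass from $p(x,x,2Ar^2)$ to $p(x,x,4r^2)$, then use the on-diagonal upper bound at $t=4r^2$ to conclude $\mu(B(x,2r))\le C\,\mu(B(x,r))$ directly, with no iteration.

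The more serious gap is your plan for \eqref{Pb} itself. The dimensional reverse log-Sobolev inequality \eqref{P}, applied to an approximation of $\mathbf 1_{B(x,r)}$, contains no information about the radius $r$; there is no mechanism by which the choice ``$t\asymp r^2$'' alone would interact with the geometry of the ball and force a uniform lower bound. The paper proceeds quite differently: it works with $f=\mathbf 1_{B(x,r)^c}$ (the \emph{complement}), uses \eqref{P} to derive a first-order differential inequality in $t$ for $u(t)=\sqrt{-\ln P_t f(x)}$ (Proposition~\ref{prop-reverse-harnack}), integrates it via an explicit antiderivative $G$ (Corollary~\ref{C:ineq5}), and then --- crucially --- couples the result with an \emph{independent} small-time asymptotic
\[
\liminf_{s\to 0^+}\bigl(-s\ln P_s\mathbf 1_{B(x,r)^c}(x)\bigr)\ge \frac{r^2}{4}
\]
(Proposition~\ref{P:Hino}), which is where the radius $r$ actually enters. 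That asymptotic is itself a nontrivial ingredient, proved from the a-priori polynomial volume growth of Proposition~\ref{P:vg}, a weighted $L^2$ maximum-principle estimate (Lemma~\ref{L:aronson}), and the Harnack inequality. Your sketch omits this entire line of argument; without the small-time input there is no way to close the proof of \eqref{Pb}.
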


\subsection{Small time asymptotics}

As a first step, we prove a small time asymptotics result interesting in itself. In what follows for a given set $A\subset \M$ we will denote by $\mathbf 1_A$ its indicator   function.

\begin{proposition}\label{P:Hino}
Given $x\in \M$ and $r>0$, let $f=\mathbf{1}_{B(x,r)^c }$. One has,
\[
\liminf_{s \to 0^+} (-s\ln{ P_s f}(x))\ge \frac{r^2}{4}.
\]
\end{proposition}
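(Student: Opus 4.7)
The plan is to combine the weighted $L^2$ estimate of Lemma \ref{L:aronson} (applied with a negative weight to the bounded function $f=\mathbf{1}_{B(x,r)^c}$) with the heat-kernel Harnack inequality of Proposition \ref{P:harnack}, and then extract the sharp constant by carefully optimizing over two auxiliary parameters.

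First I would apply Lemma \ref{L:aronson} with $\alpha=-\beta$, $\beta>0$, to the bounded function $f$, obtaining, for every $t>0$,
\[
\int_\M e^{-2\beta d(y,x)}(P_t f)(y)^2\,d\mu(y) \le e^{2\beta^2 t}\int_{B(x,r)^c} e^{-2\beta d(y,x)}\,d\mu(y).
\]
The tail integral on the right I would handle by a layer-cake decomposition combined with Proposition \ref{P:vg}. Writing
\[
\int_{B(x,r)^c} e^{-2\beta d(y,x)}\,d\mu(y) = \int_r^\infty \mu(B(x,u))\cdot 2\beta e^{-2\beta u}\,du
\]
and invoking $\mu(B(x,u))\le A u^D$ from Proposition \ref{P:vg}, an incomplete-Gamma calculation yields a bound of the form $C(x,r,D)\,e^{-2\beta r}$ for all $\beta$ sufficiently large. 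Obtaining the \emph{doubled} exponent $-2\beta r$, rather than the cruder $-\beta r$ one would get from bounding $e^{-2\beta d}\le e^{-2\beta r}$ only once, is the first delicate point.

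Next I would convert this weighted $L^2$ bound to a pointwise bound at $x$. Fix a small $\epsilon>0$ and set $\rho=\epsilon s$, $t=(1+\epsilon)s$. Using $e^{-2\beta d(y,x)}\ge e^{-2\beta\rho}$ on $B(x,\rho)$ gives
\[
\int_{B(x,\rho)} (P_t f)^2\,d\mu \le C(x,r,D)\,e^{2\beta\rho + 2\beta^2 t - 2\beta r}.
\]
Proposition \ref{P:harnack}, applied to $p(x,\cdot,s)$ via the symmetry $p(x,w,s)=p(w,x,s)$ and then integrated against $f$, yields for every $y\in B(x,\rho)$
\[
P_s f(x) \le (1+\epsilon)^{D/2}\exp\!\Bigl(\frac{D\rho^2}{4d\,\epsilon s}\Bigr) P_t f(y).
\]
Averaging over $B(x,\rho)$ and applying the Cauchy--Schwarz inequality to $\int_{B(x,\rho)} P_t f\,d\mu$ gives
\[
P_s f(x) \le C_\epsilon\, \mu(B(x,\epsilon s))^{-1/2}\exp\!\bigl(\beta\rho+\beta^2 t-\beta r\bigr).
\]

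Finally, I would set $\beta=c/s$ for $c>0$, so that $\beta\rho=c\epsilon$, $\beta^2 t=c^2(1+\epsilon)/s$ and $\beta r=cr/s$. Taking $-s\log$ of the pointwise bound and letting $s\to 0^+$, the $s$-dependent error terms vanish (in particular $(s/2)\log\mu(B(x,\epsilon s))\to 0$ via the small-ball volume asymptotics $\mu(B(x,r))\gtrsim r^N$ available in the geometric settings of the paper, e.g.\ Carnot groups and Sasakian manifolds), leaving
\[
\liminf_{s\to 0^+}\bigl(-s\log P_s f(x)\bigr) \ge cr - c^2(1+\epsilon).
\]
Optimizing in $c$ yields the maximum value $r^2/(4(1+\epsilon))$ at $c=r/(2(1+\epsilon))$. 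Since this lower bound holds for every $\epsilon>0$, sending $\epsilon\to 0^+$ gives the desired $r^2/4$.

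The main obstacle is obtaining the sharp constant $r^2/4$ rather than a smaller multiple. Two refinements are essential: the doubled tail exponent $-2\beta r$ from the layer-cake analysis in the first step, and the Harnack application with time ratio $(1+\epsilon):1$ arbitrarily close to $1:1$. The naive choice of ratio $2:1$ produces the Aronson exponent $2\beta^2 t$ with $t=2s$, which would yield only $r^2/8$; letting $\epsilon\to 0^+$ \emph{after} optimizing in $c$ is what recovers the full constant $r^2/4$.
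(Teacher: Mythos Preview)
Your strategy is close to the paper's: both combine Lemma \ref{L:aronson} with a negative weight, the Harnack inequality of Proposition \ref{P:harnack}, and the polynomial volume upper bound of Proposition \ref{P:vg}, and both extract the sharp exponent by optimizing over the weight parameter and then sending an auxiliary $\varepsilon\to 0$. The paper organizes the steps somewhat differently---Cauchy--Schwarz is applied at the level of the heat-kernel integral, splitting $P_t f(x)^2$ into a product of two weighted integrals (one with weight $e^{+2\alpha d}$, one with $e^{-2\alpha d}$), and Lemma \ref{L:aronson} is invoked twice, once with each sign---but the mechanism is essentially the same and your arithmetic leading to $r^2/(4(1+\epsilon))$ is correct.

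There is, however, a genuine gap in your version as written. By choosing the averaging radius $\rho=\epsilon s$ to shrink with $s$, you must control $\tfrac{s}{2}\log\mu(B(x,\epsilon s))$ as $s\to 0^+$, and for this you invoke a small-ball volume \emph{lower} bound $\mu(B(x,r))\gtrsim r^N$. No such bound is part of the paper's abstract hypotheses; the only volume information available a priori is the \emph{upper} bound of Proposition \ref{P:vg}. You acknowledge this by restricting to ``the geometric settings of the paper,'' but then the proposition is not established in the stated generality. The paper avoids the issue by working with a ball of \emph{fixed} radius $\varepsilon$, so that $\mu(B(x,\varepsilon))$ is a fixed positive constant and $t\log\mu(B(x,\varepsilon))\to 0$ trivially. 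Your argument repairs in exactly the same way: take $\rho=\epsilon$ fixed. The Harnack factor then becomes $\exp\!\bigl(D\epsilon/(4d s)\bigr)$, contributing an additive $-D\epsilon/(4d)$ to $-s\log P_s f(x)$; the weight restriction $e^{-2\beta d}\ge e^{-2\beta\epsilon}$ contributes $-s\beta\epsilon=-c\epsilon$; both are absorbed by letting $\epsilon\to 0$ at the end, and the optimization in $c$ now gives $(r-\epsilon)^2/(4(1+\epsilon))\to r^2/4$.
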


\begin{proof}To prove the proposition it will suffice to show that
\[
\limsup_{t \to 0^+} (t \ln{ P_t f}(x))\le -\frac{r^2}{4}.
\]
Let $0<\varepsilon<r$. By the Harnack inequality of Proposition \ref{P:harnack} and the symmetry of the heat kernel, we have for $y \in \bM$ and $z \in B(x,\varepsilon)$,
\[
p(x,y,t)\le p(z,y,(1+\varepsilon)t)2^{D/m} e^{\frac{D\varepsilon}{4 m t}}.
\]
Therefore, multiplying the above inequality by $f(y)=\mathbf{1}_{B(x,r)^c } (y)$ and then integrating with respect to $y$, we obtain
\[
P_tf(x)\le (P_{(1+\varepsilon)t}f)(z)2^{D/m} e^{\frac{D\varepsilon}{4 m t}}.
\]
By integrating now with respect to $z \in B(x,\varepsilon)$, we get
\[
P_tf(x)\le\frac{2^{D/m} e^{\frac{D\varepsilon}{4 m t}}}{\mu(B(x,\varepsilon))}  \int_\bM \mathbf{1}_{B(x,\varepsilon) }(z) (P_{(1+\varepsilon)t}f)(z) d\mu(z) .
\]
Now, from Theorem 1.1 in \cite{HiR} (for which normalization differs from us by a factor $1/2$ because he considers the semigroup $e^{tL/2}$), we obtain:
 \[
\lim_{t \to 0} t \ln  \int_\bM \mathbf{1}_{B(x,\varepsilon) }(z) (P_{(1+\varepsilon)t}f)(z) d\mu(z)=-\frac{(r-\varepsilon)^2}{4(1+\varepsilon)}.
 \]
 This yields therefore
 \[
\limsup_{t \to 0^+} (t \ln{ P_t f}(x))\le -\frac{(r-\varepsilon)^2}{4(1+\varepsilon)}+\frac{D\varepsilon}{4m}.
\]
We conclude by letting $\varepsilon \to 0$. 
\end{proof}

\subsection{Reverse Harnack inequalities}\label{S:reverse}

As a second step toward the proof of Theorem \ref{T:doubling} we investigate some of the consequences of the reverse log-Sobolev inequality in Proposition \ref{P:sharper-reverse-log-sob} for functions $f$ such that $0\le f \le 1$ (later, we will apply this to indicator functions).


\begin{proposition}\label{prop-reverse-harnack}
Let $\varepsilon>0$,  $f\in \mathcal{A}_\varepsilon$, $\varepsilon \le f \le 1$, and consider the function 
$u(x,t)=\sqrt{-\ln P_t f(x)}$.
Then, with the convention that $\frac{1}{0}=+\infty$, we have 
\[
2t u_t +u+\left( 1+\sqrt{\frac{D^*}{2} } \right) u^{1/3}+\sqrt{\frac{D^*}{2} }u^{-1/3} \ge 0,
\]
where
\[
D^*=m \left( 1+\frac{2\kappa}{\rho_2} \right).
\]
\end{proposition}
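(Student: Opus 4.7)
The plan is to derive the inequality from the dimensional reverse log-Sobolev inequality in Theorem \ref{P:sharper-reverse-log-sob} by a careful $u$-dependent choice of the free parameters $C$ and $\delta$. Since the left-hand side of Theorem \ref{P:sharper-reverse-log-sob} is a sum of non-negative terms, I would first drop it, obtaining for every $C\ge 0$ and $\delta>0$
\[
\frac{4Ct}{(1+\delta)\rho_2 d}\,LP_tf \le \frac{D^*+4C}{\rho_2 d}\bigl[P_t(f\ln f) - P_tf\ln P_tf\bigr] + \frac{2C^2\ln(1+1/\delta)}{d\rho_2}\,P_tf.
\]
The hypothesis $0\le f\le 1$ gives $f\ln f\le 0$ and hence $P_t(f\ln f)\le 0$, so the entropy satisfies $P_t(f\ln f) - P_tf\ln P_tf \le -P_tf\ln P_tf = u^2 P_tf$. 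Combining this with the identity $LP_tf = \partial_t P_tf = -2uu_t P_tf$ (which follows from $P_tf = e^{-u^2}$), and dividing through by $uP_tf>0$, I arrive at
\[
2tu_t + (1+\delta)u + \frac{(1+\delta)D^*u}{4C} + \frac{(1+\delta)C\ln(1+1/\delta)}{2u} \ge 0, \qquad \forall\, C,\delta>0. \qquad(\star)
\]

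The key step is then to select $(C,\delta)$ as explicit functions of $u$ so that the last three terms of $(\star)$ are controlled by $u + (1+\sqrt{D^*/2})u^{1/3} + \sqrt{D^*/2}\,u^{-1/3}$. I would take
\[
\delta = \frac{1}{e^{u^{2/3}} - 1}, \qquad C = u^{2/3}\sqrt{D^*/2},
\]
so that $\ln(1+1/\delta) = u^{2/3}$ and $C$ is precisely the AM-GM optimum for the two middle terms at this value of $\delta$. A direct calculation then gives
\[
\frac{D^*u}{4C} = \frac{C\ln(1+1/\delta)}{2u} = \tfrac{1}{2}\sqrt{D^*/2}\,u^{1/3},
\]
so the last three terms of $(\star)$ collapse to $(1+\delta)\bigl[u + \sqrt{D^*/2}\,u^{1/3}\bigr]$.

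Finally, from the elementary convexity bound $e^L - 1 \ge L$ valid for all $L>0$, I obtain $1+\delta = e^L/(e^L-1) \le 1 + 1/L = 1 + u^{-2/3}$. Therefore
\[
(1+\delta)\bigl[u + \sqrt{D^*/2}\,u^{1/3}\bigr] \le (1+u^{-2/3})\bigl[u + \sqrt{D^*/2}\,u^{1/3}\bigr] = u + (1+\sqrt{D^*/2})u^{1/3} + \sqrt{D^*/2}\,u^{-1/3},
\]
which inserted back into $(\star)$ yields the claimed inequality.

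The main obstacle I anticipate is identifying the correct $u$-dependence of $(C,\delta)$. Naively AM-GM-minimizing $(\star)$ over $C$ alone produces the bound $(1+\delta)u + (1+\delta)\sqrt{D^*\ln(1+1/\delta)/2}$, whose $u$-structure does not match the target (a constant plus a term linear in $u$, rather than a combination of $u$, $u^{1/3}$, $u^{-1/3}$). The two essential insights are: first, one must force $\ln(1+1/\delta)=u^{2/3}$, which converts the square-root constant into the $\sqrt{D^*/2}\,u^{1/3}$ term; second, one then needs the sharp elementary inequality $1/(e^L-1)\le 1/L$ to absorb the residual $\delta\,u + \delta\sqrt{D^*/2}\,u^{1/3}$ into precisely the $u^{1/3}$ and $u^{-1/3}$ contributions demanded by the statement.
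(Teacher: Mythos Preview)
Your proof is correct and follows essentially the same route as the paper: both start from Theorem~\ref{P:sharper-reverse-log-sob}, drop the nonnegative left-hand side, use $f\ln f\le 0$, optimize over $C$ (arriving at the same value $C=u^{2/3}\sqrt{D^*/2}$), and make a $u$-dependent choice of $\delta$. The only cosmetic difference is the order in which the elementary inequality $e^x\ge 1+x$ is invoked: the paper first replaces $\ln(1+1/\delta)$ by $1/\delta$ and then sets $\delta=u^{-2/3}$, whereas you first set $\ln(1+1/\delta)=u^{2/3}$ and then use $e^L-1\ge L$ to get $1+\delta\le 1+u^{-2/3}$; these are the same step in two parametrizations.
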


\begin{proof}
Noting that we have
\[
\frac{t}{\rho_2}  P_{t} f(x) \Gamma(\ln P_{t } f)(x) +t^2 P_{t} f(x) \Gamma^Z(\ln P_{t} f)(x)  \ge 0,
\]
applying the inequality \eqref{P} in Theorem \ref{P:sharper-reverse-log-sob},  we obtain that for all $C \geq 0$,
\begin{align*}
\frac{m}{2} \left( 1+\frac{2\kappa}{\rho_2} +\frac{4C}{m}\right) P_{ t } (f \ln f)(x) - \frac{m}{2} \left( 1+\frac{2\kappa}{\rho_2} +\frac{4C}{m}\right) (P_{t}f)\ln P_{ t} f  - \frac{2Ct}{1+\delta} LP_{t } f+\frac{C^2}{\delta} P_{t} f \ge 0,
\end{align*}
where we used the fact that
\[
\ln \left( 1+\frac{1}{\delta} \right)\le \frac{1}{\delta}.
\]
On the other hand, the hypothesis $0\le f \le 1$ implies $f\ln f \le 0$. After dividing both sides of the above inequality by $P_t f $, we thus find
\begin{align*}
- \frac{m}{2} \left( 1+\frac{2\kappa}{\rho_2} +\frac{4C}{m}\right) \ln P_{ t} f  - \frac{2Ct}{1+\delta} \frac{LP_{t } f}{P_t f} +\frac{C^2}{\delta} \ge 0.
\end{align*}
Dividing both sides by $C>0$, this may be re-written 
\begin{align}\label{ineq3}
- \frac{D^*}{2C} \ln P_{ t} f  -2  \ln P_{ t} f  - \frac{2t}{1+\delta} \frac{LP_{t } f}{P_t f} +\frac{C}{\delta} \ge 0.
\end{align}
We now minimize the left-hand side of \eqref{ineq3} with respect to $C$. The minimum value is attained in
\[
C=\sqrt{-\frac{\delta D^*}{2} \ln P_t f}.
\]
Substituting this value in \eqref{ineq3}, we obtain
\[
\sqrt{\frac{2D^*}{\delta} }\sqrt{-\ln P_t f}  -2  \ln P_{ t} f  - \frac{2t}{1+\delta} \frac{LP_{t } f}{P_t f} \ge 0.
\]
With $u(x,t)=\sqrt{-\ln P_t f(x)}$, and noting that $u_t=-\frac{1}{2u} \frac{ L P_t f }{P_t f}$, we can re-write this inequality as follows,
\[
\sqrt{\frac{D^*}{2\delta} } +  u  + \frac{2t}{1+\delta} u_t  \ge 0,
\]
or equivalently,
\[
2t  u_t +u +\delta u +(1+\delta) \sqrt{\frac{D^*}{2\delta} } \ge 0.
\]
Finally, if we choose
\[
\delta=\frac{1}{u^{2/3}},
\]
we obtain the desired conclusion.

\end{proof}

We now introduce the function $g:(0,\infty)\to (0,\infty)$ defined by
\begin{align}\label{function_g}
g(v) = \frac{1}{v+\left( 1+\sqrt{\frac{D^*}{2} } \right) v^{1/3}+\sqrt{\frac{D^*}{2} }v^{-1/3}}.
\end{align}
One easily verifies that 
\[
\underset{v\to 0^+}{\lim} \sqrt{\frac{D^*}{2}}v^{-1/3} g(v) = 1,\ \ \ \ \ \underset{v\to \infty}{\lim} v g(v) = 1.
\]
These limit relations show that $g\in L^1(0,A)$ for every $A>0$, but $g\not\in L^1(0,\infty)$. Moreover, if we set \[
G(u) = \int_0^u g(v) dv,
\]
then $G'(u) = g(u) >0$, and thus $G:(0,\infty)\to (0,\infty)$ is invertible. Furthermore, as is seen from \eqref{function_g}, as $ u \to \infty$ we have
\begin{equation}\label{Gasym}
G(u)=\ln u +C_0+R(u),
\end{equation}
where $C_0$ is a constant and $\underset{u\to \infty}{\lim} R(u) = 0$.
At this point we notice that, in terms of the function $g(u)$, we can re-express the conclusion of Proposition \ref{prop-reverse-harnack} in the form
\[
2 t u_t + \frac{1}{g(u)} \ge 0.
\]
Keeping in mind that $g(u) = G'(u)$, we thus conclude
\begin{equation}\label{G'}
\frac{d G(u)}{d t} = G'(u) u_t \ge - \frac{1}{2t}.
\end{equation}
From this identity we now obtain the following basic result.

\begin{corollary}\label{C:ineq5}
Let $f\in L^\infty(\bM)$, $0 \le f \le 1$, then for any $x\in \M$ and $0<s < t$,
\[
G\left(\sqrt{-\ln P_t f(x)}\right) \ge  G\left(\sqrt{-\ln P_s f(x)}\right)  -\frac{1}{2} \ln \left( \frac{t}{s} \right).
\]
\end{corollary}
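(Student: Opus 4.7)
The plan is to integrate the differential inequality established just above the statement, namely
\[
\frac{d}{dt} G(u(x,t)) \ge -\frac{1}{2t},
\]
with $u(x,t) = \sqrt{-\ln P_t f(x)}$. I first treat $f \in \mathcal A_\varepsilon$ with $0 \le f \le 1$, which is the exact hypothesis of Proposition \ref{prop-reverse-harnack}. Stochastic completeness gives $P_\tau f \le P_\tau 1 = 1$, while positivity of the heat kernel (together with $f \ge \varepsilon$) gives $P_\tau f > 0$, so $u(x,\cdot)$ is a smooth non-negative function on $(0,\infty)$ and $G$ is smooth and strictly increasing on its range. Integrating \eqref{G'} over $[s,t]$ yields
\[
G(u(x,t)) - G(u(x,s)) \ge -\frac{1}{2}\int_s^t \frac{d\tau}{\tau} = -\frac{1}{2}\ln\!\left(\frac{t}{s}\right),
\]
which is the desired estimate for such $f$.

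To extend the inequality from $\mathcal A_\varepsilon$ to a general $f \in L^\infty(\M)$ with $0 \le f \le 1$, I would approximate. Pick $\varepsilon_n\downarrow 0$ and the cutoffs $h_n$ furnished by (H.1), and set
\[
f_n = \varepsilon_n + (1 - 2\varepsilon_n)\, h_n\, P_{1/n} f.
\]
Smoothing by $P_{1/n}$ turns $f$ into a smooth bounded function, while $h_n$ forces compact support; combined with the shift by $\varepsilon_n$ this places $f_n \in \mathcal A_{\varepsilon_n}$ with $0 \le f_n \le 1$. Applying the previous step to each $f_n$ and letting $n\to\infty$, one observes $P_\tau f_n(x) \to P_\tau f(x)$ for every $\tau>0$ and $x\in \M$ (by dominated convergence applied to the heat-kernel integral representation, together with the semigroup property $P_\tau P_{1/n} = P_{\tau+1/n}$ and strong continuity of $P_\tau$). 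Continuity of $v \mapsto G(\sqrt{-\ln v})$ on $(0,1]$ then allows passage to the limit on both sides of the inequality, giving the conclusion for $f$.

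The only technical point I expect to be delicate is the verification that the approximants $f_n$ genuinely belong to $\mathcal A_{\varepsilon_n}$, in particular that $\sqrt{\Gamma(h_n P_{1/n}f)}$ and $\sqrt{\Gamma^Z(h_n P_{1/n}f)}$ are square-integrable. This should follow from the compact support of $h_n$ combined with the smoothness of $P_{1/n}f$ (from hypoellipticity of $L$) and the Leibniz rule for $\Gamma, \Gamma^Z$; all remaining steps—integration of the differential inequality and the passage to the limit—are routine.
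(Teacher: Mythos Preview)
Your proof is correct and follows the same two-step strategy as the paper: integrate \eqref{G'} over $[s,t]$ for $f\in\mathcal A_\varepsilon$, then pass to general $f\in L^\infty(\M)$ by approximation. The paper's approximants are $(1-\varepsilon)P_\tau(h_n f)+\varepsilon$ with the limits $\varepsilon\to 0$, $\tau\to 0$, $n\to\infty$ taken in succession, whereas you bundle them into a single sequence $f_n=\varepsilon_n+(1-2\varepsilon_n)\,h_n\,P_{1/n}f$; both choices work, and your variant has the minor advantage that compact support makes the verification $f_n\in\mathcal A_{\varepsilon_n}$ immediate, exactly as you note.
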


\begin{proof}
If $f \in \mathcal{A}_\varepsilon$ for some $\varepsilon$, the inequality is a straightforward consequence of the above results. In fact, keeping in mind that $u(x,t)=\sqrt{-\ln P_t f(x)}$, in order to reach the desired conclusion all we need to do is to integrate \eqref{G'} between $s$ and $t$. Consider now $f\in L^\infty(\bM)$, $0 \le f \le 1$. Let $h_n\in C^\infty_0(\M)$, with $0\leq h_n\leq 1$, and $h_n \nearrow 1$.  For $n \ge 0$, $\tau >0$ and $\varepsilon >0$, the function
\[
(1-\varepsilon)P_\tau( h_n f) +\varepsilon \in \mathcal{A}_\varepsilon. 
\]
Therefore,
\[
G\left(\sqrt{-\ln P_t ((1-\varepsilon)P_\tau( h_n f) +\varepsilon)(x)}\right) \ge  G\left(\sqrt{-\ln P_s ((1-\varepsilon)P_\tau( h_n f) +\varepsilon)(x)}\right)  -\frac{1}{2} \ln \left( \frac{t}{s} \right).
\]
Letting $\varepsilon \to 0$, $\tau \to 0$ and finally $n \to \infty$, we obtain the desired conclusion for $f$. This completes the proof.
\end{proof}

Combining Corollary \ref{C:ineq5} with Proposition \ref{P:Hino} we obtain the following key estimate.

\begin{proposition}\label{P:key}
Let $x\in \M$ and $r>0$ be arbitrarily fixed. There exists $C_0^*\in \R$, independent of $x$ and $r$, such that for any $t >0$,
\[
G\left(\sqrt{-\ln P_t \mathbf{1}_{B(x,r)^c }(x)}\right)  \ge  \ln \frac{r}{\sqrt{t}}  + C_0^*.
\]
\end{proposition}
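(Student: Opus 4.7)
The plan is to apply Corollary \ref{C:ineq5} with $f=\mathbf{1}_{B(x,r)^c}$ (which lies in $L^\infty(\M)$ with $0\le f\le 1$) and then send $s\to 0^+$. Concretely, for $0<s<t$ we have
\[
G\!\left(\sqrt{-\ln P_t \mathbf{1}_{B(x,r)^c}(x)}\right) \ge G\!\left(\sqrt{-\ln P_s \mathbf{1}_{B(x,r)^c}(x)}\right) - \tfrac{1}{2}\ln\!\tfrac{t}{s},
\]
so I just need to analyze the liminf of the right-hand side as $s\to 0^+$.

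The key observation is that Proposition \ref{P:Hino} forces $-\ln P_s \mathbf{1}_{B(x,r)^c}(x)\to\infty$ as $s\to 0^+$ (since $-s\ln P_s \mathbf{1}_{B(x,r)^c}(x)$ has liminf at least $r^2/4>0$). Thus the argument of $G$ tends to infinity, and I can substitute the asymptotic expansion \eqref{Gasym}:
\[
G\!\left(\sqrt{-\ln P_s \mathbf{1}_{B(x,r)^c}(x)}\right) = \tfrac{1}{2}\ln\bigl(-\ln P_s \mathbf{1}_{B(x,r)^c}(x)\bigr) + C_0 + o(1) \quad \text{as } s\to 0^+.
\]
Combining with the $-\tfrac{1}{2}\ln(t/s)$ term and regrouping, the right-hand side becomes
\[
\tfrac{1}{2}\ln\bigl(-s\ln P_s \mathbf{1}_{B(x,r)^c}(x)\bigr) - \tfrac{1}{2}\ln t + C_0 + o(1).
\]

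Now taking $\liminf_{s\to 0^+}$ and invoking Proposition \ref{P:Hino}, the first term has liminf at least $\tfrac{1}{2}\ln(r^2/4)=\ln(r/2)$ by continuity of $\ln$ on $(0,\infty)$. Therefore
\[
G\!\left(\sqrt{-\ln P_t \mathbf{1}_{B(x,r)^c}(x)}\right) \ge \ln(r/2) - \tfrac{1}{2}\ln t + C_0 = \ln\!\tfrac{r}{\sqrt{t}} + C_0^*,
\]
where $C_0^* := C_0 - \ln 2$ is independent of $x$, $r$, $t$, completing the proof.

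There is no real obstacle here: the two nontrivial inputs (the differential inequality encoded in Corollary \ref{C:ineq5} and the small-time lower bound of Proposition \ref{P:Hino}) have already been established, and \eqref{Gasym} ensures that $G$ behaves like $\ln$ at infinity, which is precisely the matching asymptotic needed to turn $-s\ln P_s\mathbf 1_{B(x,r)^c}(x)\gtrsim r^2/4$ into the clean lower bound $\ln(r/\sqrt t)+C_0^*$. The only mild care required is justifying that the liminf can be passed through the sum; this is automatic since $-\tfrac12\ln(t/s)+\tfrac12\ln s=-\tfrac12\ln t$ cancels the unwanted $s$-dependence exactly, leaving a quantity whose liminf is controlled by Proposition \ref{P:Hino}.
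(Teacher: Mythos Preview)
Your proof is correct and follows essentially the same approach as the paper's own proof: apply Corollary \ref{C:ineq5} with $f=\mathbf{1}_{B(x,r)^c}$, use Proposition \ref{P:Hino} to see that the argument of $G$ tends to infinity as $s\to 0^+$, substitute the asymptotic \eqref{Gasym}, regroup to make $-s\ln P_s f(x)$ appear, take the $\liminf$, and set $C_0^*=C_0-\ln 2$. The steps and the resulting constant coincide with the paper's argument.
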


\begin{proof}
Re-write the inequality claimed in Corollary \ref{C:ineq5} as follows
\[
G\left(\sqrt{-\ln P_t f(x)}\right) \ge  G(\sqrt{-\ln P_s f(x)})  + \ln \sqrt{s} - \ln \sqrt t,
\]
where we have presently let $f(y) = \mathbf 1_{B(x,r)^c}(y)$. Since for this function we have, from Proposition \ref{P:Hino}, $\underset{s\to 0^+}{\lim} (-\ln P_s f(x)) = \infty$, using \eqref{Gasym} we see that, for $s\to 0^+$,  the latter inequality is equivalent to
\[
G\left(\sqrt{-\ln P_t f(x)}\right) \ge \ln \sqrt{-s \ln P_s f(x)} - \ln \sqrt t + C_0 + R(\sqrt{-\ln P_s f(x)}).
\]
We now take the $\liminf$ as $s\to 0^+$ of both sides of this inequality. Applying Proposition \ref{P:Hino} we deduce
\[
G\left(\sqrt{-\ln P_t f(x)}\right) \ge \ln \frac{r}{2}- \ln \sqrt t + C_0 = \ln \frac{r}{\sqrt t} + C_0^*,
\]
where we have let $C_0^* = C_0 - \ln 2$. This establishes the desired conclusion.

\end{proof}

We are now in a position to prove  the central result in this paper.

\begin{theorem}\label{T:estimee-P-boule}
There exists a constant $A>0$ such that for every $x \in \mathbb{M}$, and $r>0$,
\[
P_{Ar^2} (\mathbf{1}_{B(x,r) })(x) \ge \frac{1}{2}.
\]
\end{theorem}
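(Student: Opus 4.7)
The plan is to deduce this from the key Proposition~\ref{P:key} by choosing $t = Ar^2$ with $A$ sufficiently small, and rewriting $P_t \mathbf{1}_{B(x,r)}(x) = 1 - P_t \mathbf{1}_{B(x,r)^c}(x)$ using stochastic completeness.

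More precisely, I would argue as follows. Fix $x \in \mathbb{M}$ and $r>0$, and set $\varphi(t) = P_t \mathbf{1}_{B(x,r)^c}(x)$. Since $P_t 1 = 1$, the claim $P_{Ar^2}(\mathbf{1}_{B(x,r)})(x) \ge \tfrac{1}{2}$ is equivalent to $\varphi(Ar^2) \le \tfrac{1}{2}$. Because the heat kernel is strictly positive, $0 < \varphi(t) < 1$ for all $t>0$, so $\sqrt{-\ln \varphi(t)}$ is well-defined and positive. Proposition~\ref{P:key} applied to $f = \mathbf{1}_{B(x,r)^c}$ yields, for every $t>0$,
\[
G\!\left(\sqrt{-\ln \varphi(t)}\right) \;\ge\; \ln\frac{r}{\sqrt{t}} + C_0^*,
\]
with $C_0^*$ independent of $x$ and $r$. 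Specializing to $t = Ar^2$, the right-hand side becomes $-\tfrac{1}{2}\ln A + C_0^*$, which is independent of $r$.

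Now choose $A>0$ small enough so that
\[
-\tfrac{1}{2}\ln A + C_0^* \;\ge\; G\!\left(\sqrt{\ln 2}\right);
\]
since $G : (0,\infty) \to (0,\infty)$ is continuous and $\ln A \to -\infty$ as $A \to 0^+$, such an $A$ exists. Because $G$ is strictly increasing (its derivative is the strictly positive function $g$), we may invert to conclude $\sqrt{-\ln \varphi(Ar^2)} \ge \sqrt{\ln 2}$, and therefore $\varphi(Ar^2) \le \tfrac{1}{2}$, which is exactly the claim after using $P_{Ar^2}\mathbf{1}_{B(x,r)}(x) = 1 - \varphi(Ar^2)$.

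I do not expect any real obstacle here: all the deep work (the dimensional reverse log-Sobolev inequality, the Hino-type small-time asymptotics, and the reverse Harnack inequality expressed through $G$) has already been carried out in the preceding sections. The only points to be careful about are (i) verifying that $\varphi(t) \in (0,1)$ so the logarithms are meaningful, which follows from stochastic completeness $P_t 1 = 1$ together with positivity of $p(x,y,t)$; (ii) that $G$ is indeed invertible, already noted since $G' = g > 0$; and (iii) that the constant $A$ can be chosen uniformly in $x$ and $r$, which is automatic from the $(x,r)$-independence of $C_0^*$ in Proposition~\ref{P:key}.
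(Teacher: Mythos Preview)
Your proof is correct and follows essentially the same approach as the paper: reduce to the complement via stochastic completeness, invoke Proposition~\ref{P:key} with $t=Ar^2$, and use the monotonicity of $G$ together with $-\tfrac{1}{2}\ln A\to\infty$ as $A\to 0^+$ to force the desired inequality. If anything, you are slightly more explicit than the paper about why $\varphi(t)\in(0,1)$ and why the choice of $A$ is uniform in $x$ and $r$.
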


\begin{proof}
By the stochastic completeness of $\M$  we know that $P_t 1 = 1$. Therefore, 
\[
P_{Ar^2} (\mathbf{1}_{B(x,r)})(x) = 1 - P_{Ar^2} (\mathbf{1}_{B(x,r)^c})(x).
\]
We conclude that the desired estimate is equivalent to proving that there exists an absolute constant $A>0$ such that
\[
\sqrt{\ln 2} \le \sqrt{- \ln P_{Ar^2} (\mathbf{1}_{B(x,r)^c})(x)},
\]
or, equivalently,
\begin{equation}\label{final}
G\left(\sqrt{\ln 2}\right) \le G\left(\sqrt{- \ln P_{Ar^2} (\mathbf{1}_{B(x,r)^c})(x)}\right).
\end{equation}
At this point we invoke Proposition \ref{P:key}, which gives
\[
G\left(\sqrt{- \ln P_{Ar^2} (\mathbf{1}_{B(x,r)^c})(x)}\right) \ge \ln\left(\frac{1}{\sqrt A} + C_1\right).
\]
It is thus clear that, letting $A\to 0^+$, we can certainly achieve \eqref{final}, thus completing the proof.

\end{proof}

With Theorem \ref{T:estimee-P-boule}
in hand we can finally prove Theorem \ref{T:doubling}.

\begin{proof}[Proof of Theorem \ref{T:doubling}]
The argument which shows how to obtain Theorem \ref{T:doubling} from Theorem \ref{T:estimee-P-boule} was developed independently by Grigor'yan \cite{Gri} and by Saloff-Coste \cite{SC}, and it is by now well-known. However, since it is short for the sake of completeness in what follows we provide the relevant details. 

From the semigroup property and the symmetry of the heat kernel we
have for any $y\in \bM$ and $t>0$
\[ p(y,y,2t) = \int_\bM  p(y,z,t)^2 d\mu(z).
\]

Consider now a function $h\in C^\infty_0(\bM)$ such that $0\le h\le
1$, $h\equiv 1$ on $B(x,\sqrt{t}/2)$ and $h\equiv 0$ outside
$B(x,\sqrt t)$. We thus have
\begin{align*}
P_t h(y) & = \int_\bM p(y,z,t) h(z) d\mu(z) \le \left(\int_{\bM}
p(y,z,t)^2 d\mu(z)\right)^{\frac{1}{2}} \left(\int_\bM h(z)^2
d\mu(z)\right)^{\frac{1}{2}}
\\
& \le p(y,y,2t)^{\frac{1}{2}} \mu(B(x,\sqrt t))^{\frac{1}{2}}.
\end{align*}
If we take $y=x$, and $t =r^2$, we obtain
\begin{equation}\label{ine7}
P_{r^2} \left(\mathbf 1_{B(x,r/2)}\right)(x)^2 \le P_{r^2} h(x)^2 \leq
p(x,x,2r^2)\ \mu(B(x,r)).
\end{equation}
At this point we use Theorem \ref{T:estimee-P-boule}, which gives for some $0<A<1$, (the fact that we can choose $A<1$ is clear from the proof of Theorem \ref{T:estimee-P-boule})
\[
P_{Ar^2}(\mathbf 1_{B(x,r/2)})(x) \ge \frac{1}{2}, \ \ \ \ \ x\in \M, r>0.
\]
Combining this estimate with the Harnack inequality in Proposition \ref{P:harnack} and with \eqref{ine7}, we obtain the following 
on-diagonal lower bound
\begin{equation}\label{odlb}
p(x,x,2r^2) \ge \frac{C^*}{\mu(B(x,r))},\ \ \ \ \ x\in \bM,\ r>0.
\end{equation}
Applying Proposition \ref{P:harnack} 
we find for every $y\in B(x,\sqrt t)$, 
\[
p(x,x,t) \le  C p(x,y,2t).
\]
Integration over $B(x,\sqrt t)$ gives
\[
p(x,x,t)\mu(B(x,\sqrt t)) \le C \int_{B(x,\sqrt
t)}p(x,y,2t)d\mu(y) \le C,
\]
where we have used $P_t1\le 1$. Letting $t = r^2$, we obtain from this the on-diagonal upper
bound \begin{equation}\label{odub} p(x,x,r^2) \le
\frac{C}{\mu(B(x,r))}.
\end{equation}
Combining \eqref{odlb} with \eqref{odub} we finally obtain
\[
\mu(B(x,2r)) \le \frac{C}{p(x,x,4r^2)} \le \frac{C C'}{p(x,x,2r^2)}
\le C^{**} \mu(B(x,r)),
\]
where we have used once more Proposition \ref{P:harnack} (with $y=z=x$),
which gives
\[
\frac{p(x,x,2r^2)}{p(x,x,4r^2)}\le C',
\]
and we have let  $C^{**} = C C' (C^{*})^{-1}$.
This completes the proof.

\end{proof}

It is well-known that Theorem \ref{T:doubling} provides the following uniformity control at all scales.

\begin{theorem}\label{T:uniformity}
With $C_1$ being the constant in Theorem \ref{T:doubling}, let $Q = \log_2 C_1$. For any $x\in \bM$ and $r>0$
one has
\[
\mu(B(x,tr)) \ge C_1^{-1} t^{Q} \mu(B(x,r)),\ \ \ 0\le t\le 1.
\]
\end{theorem}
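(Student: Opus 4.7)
This is a routine iteration of Theorem \ref{T:doubling}; the task is simply to pass from dyadic scales to arbitrary scales while tracking the constant. The plan is to first iterate the doubling inequality at dyadic scales, then to sandwich a general $t\in(0,1]$ between two consecutive powers of $1/2$ and use monotonicity of $\mu(B(x,\cdot))$ to interpolate.

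First I would apply Theorem \ref{T:doubling} $k$ times to obtain, for every integer $k\ge 0$ and every $s>0$,
\[
\mu(B(x,2^k s)) \le C_1^{k}\,\mu(B(x,s)).
\]
Setting $s = 2^{-k}r$ gives the equivalent lower-bound form
\[
\mu(B(x,2^{-k} r)) \ge C_1^{-k}\,\mu(B(x,r)).
\]
Since $Q = \log_2 C_1$, we have $C_1^{-k} = 2^{-kQ} = (2^{-k})^{Q}$, so this is exactly the desired inequality at the dyadic values $t=2^{-k}$, with constant $1$ rather than $C_1^{-1}$.

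For a general $t\in(0,1]$, I would choose the unique nonnegative integer $k$ with $2^{-(k+1)} < t \le 2^{-k}$. Monotonicity of the measure of balls in the radius yields
\[
\mu(B(x,tr)) \ge \mu(B(x,2^{-(k+1)}r)) \ge C_1^{-(k+1)}\mu(B(x,r)).
\]
From $t > 2^{-(k+1)}$ we get $k+1 < 1 - \log_2 t$, hence
\[
C_1^{-(k+1)} = 2^{-(k+1)Q} > 2^{(\log_2 t - 1)Q} = C_1^{-1}\,t^{Q}.
\]
Combining the two displayed inequalities delivers
\[
\mu(B(x,tr)) \ge C_1^{-1}\,t^{Q}\,\mu(B(x,r)),
\]
which is the stated conclusion.

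I expect no genuine obstacle: the only care needed is to pick $k$ on the correct side of $t$ so that monotonicity of the radius works in the right direction, and to verify that the loss in passing from $t$ to $2^{-(k+1)}$ contributes exactly the factor $C_1^{-1}$. No additional hypothesis beyond Theorem \ref{T:doubling} is used, so the argument is valid in the full generality of the framework of Section \ref{SS:frame}.
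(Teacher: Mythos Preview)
Your argument is correct and is exactly the standard iteration the paper has in mind; in fact the paper gives no proof at all, merely stating that ``it is well-known that Theorem \ref{T:doubling} provides the following uniformity control at all scales.'' One small slip: the bound $k+1 \le 1 - \log_2 t$ (non-strict) comes from the upper inequality $t \le 2^{-k}$, not from $t > 2^{-(k+1)}$ as you wrote; with this correction the chain $C_1^{-(k+1)} = 2^{-(k+1)Q} \ge 2^{(\log_2 t - 1)Q} = C_1^{-1} t^{Q}$ goes through, and the rest is fine.
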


\section{Two-sided Gaussian bounds, Poincar\'e inequality and parabolic Harnack inequality}\label{S:ogb}

The purpose of this section is to establish some optimal two-sided bounds for the heat kernel $p(x,y,t)$ associated with the subelliptic operator  $L$. Such estimates are reminiscent of those obtained by Li and Yau for complete Riemannian manifolds having Ric $\ge 0$. 
As a consequence of the  two-sided Gaussian bound for the heat kernel, we will derive the  Poincar\'e inequality and the local parabolic Harnack inequality thanks to well-known results in the works \cite{FS}, \cite{KS3}, \cite{Gri} ,  \cite{SC}, \cite{St1}, \cite{St2}, \cite{St3}.

We assume, once again, that the assumptions (H.1)-(H.5) be satisfied, and that the generalized curvature-dimension inequality CD$(\rho_1,\rho_2,\kappa,m)$ hold, with $\rho_1\ge 0$. 
Here is our main result.

\begin{theorem}\label{T:gb}
For any $0<\ve <1$
there exists a constant $C(\ve) = C(m,\kappa,\rho_2,\ve)>0$, which tends
to $\infty$ as $\ve \to 0^+$, such that for every $x,y\in \bM$
and $t>0$ one has
\[
\frac{C(\ve)^{-1}}{\mu(B(x,\sqrt
t))} \exp
\left(-\frac{D d(x,y)^2}{m(4-\ve)t}\right)\le p(x,y,t)\le \frac{C(\ve)}{\mu(B(x,\sqrt
t))} \exp
\left(-\frac{d(x,y)^2}{(4+\ve)t}\right).
\]
\end{theorem}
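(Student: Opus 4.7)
The plan is to establish the upper and lower Gaussian bounds separately, using the building blocks that are now in place: the on-diagonal two-sided estimate $p(x,x,t)\asymp\mu(B(x,\sqrt t))^{-1}$ obtained inside the proof of Theorem~\ref{T:doubling}, the global doubling of Theorem~\ref{T:doubling} together with its uniform consequence Theorem~\ref{T:uniformity}, the scale-invariant Harnack inequality of Proposition~\ref{P:harnack}, and the weighted $L^2$ decay of Lemma~\ref{L:aronson}.

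For the upper bound I would invoke the Davies perturbation method. Starting from the semigroup identity $p(x,y,t)=\int_{\M}p(x,z,t/2)\,p(z,y,t/2)\,d\mu(z)$ and Cauchy--Schwarz with reciprocal weights $e^{\pm\alpha d(z,y)}$, one obtains
\[
p(x,y,t)\le\left(\int p(x,z,t/2)^2 e^{2\alpha d(z,y)}d\mu(z)\right)^{1/2}\!\!\left(\int p(z,y,t/2)^2 e^{-2\alpha d(z,y)}d\mu(z)\right)^{1/2}.
\]
Applying Lemma~\ref{L:aronson} with the time-dependent shift $\xi(z,s)=\alpha d(z,y)-\alpha^2 s$ on $[0,t/2]$, one controls both factors by on-diagonal values of $p$ at $x$ and $y$; combining with $p(x,x,t)\le C/\mu(B(x,\sqrt t))$ and optimizing $\alpha\sim d(x,y)/((2+\ve)t)$ yields
\[
p(x,y,t)\le\frac{C(\ve)}{\sqrt{\mu(B(x,\sqrt t))\,\mu(B(y,\sqrt t))}}\exp\!\left(-\frac{d(x,y)^2}{(4+\ve)t}\right).
\]
A final application of the volume doubling property converts $\mu(B(y,\sqrt t))$ into $\mu(B(x,\sqrt t))$, absorbing the resulting factor into $C(\ve)$.

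For the lower bound I would iterate Proposition~\ref{P:harnack} along a chain of balls in the style of Li--Yau. Given $x,y\in\M$ and $t>0$, fix $N\in\mathbb N$ and choose points $x=z_0,z_1,\ldots,z_N=y$ along a nearly subunit curve from $x$ to $y$, with $d(z_k,z_{k+1})\le d(x,y)/N+\eta$ for arbitrarily small $\eta>0$, together with times $s_k=(k+1)t/(N+1)$. Telescoping Proposition~\ref{P:harnack} through this chain gives
\[
p(x,x,s_0)\le p(x,y,t)\left(\frac{s_N}{s_0}\right)^{D/2}\exp\!\left(\frac{D}{d}\,\frac{d(x,y)^2}{4t}\cdot\frac{N+1}{N}(1+O(\eta))\right).
\]
Using the on-diagonal lower bound $p(x,x,s_0)\ge C^{-1}/\mu(B(x,\sqrt{s_0}))$, Theorem~\ref{T:uniformity} to pass from $\mu(B(x,\sqrt{s_0}))$ to $\mu(B(x,\sqrt t))$, and taking $N$ of order $d(x,y)^2/(\ve t)$, one extracts the desired lower estimate with exponent $D/(d(4-\ve))$.

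The main obstacle is the sharpness of the constant $(4+\ve)^{-1}$ in the upper bound: a naive use of Lemma~\ref{L:aronson} with a constant weight gives only $(2+\ve)^{-1}$, and recovering the extra factor of two requires the full Davies time-dependent perturbation with a delicate choice of $\xi(z,s)$, combined with a careful splitting of the time interval. The fact that $d$ is only Lipschitz (with $\Gamma(d)\le 1$ a.e.) rather than smooth must also be handled with some care. For the lower bound the non-sharp factor $D/d$ is intrinsic to the method, as it is already present in Proposition~\ref{P:harnack}; it reflects the sub-Riemannian feature that the \emph{Harnack dimension} $D$ generally exceeds the \emph{differential dimension} $d$ of Definition~\ref{D:cdi}.
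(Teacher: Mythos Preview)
Your overall strategy is sound, but for both bounds the paper takes a considerably shorter route.

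For the lower bound, no chaining is needed: a \emph{single} application of Proposition~\ref{P:harnack} with $s=\ve t$ and with the second and third arguments taken to be $x$ and $y$ already gives
\[
p(x,y,t)\ \ge\ p(x,x,\ve t)\,\ve^{D/2}\exp\!\left(-\frac{D}{d}\frac{d(x,y)^2}{4(1-\ve)t}\right),
\]
after which the on-diagonal lower bound \eqref{odlb} and the trivial inclusion $B(x,\sqrt{\ve t/2})\subset B(x,\sqrt t)$ finish the job. Your Li--Yau iteration along a chain of $N$ points also works, but only after some care: the telescoped prefactor $(s_N/s_0)^{D/2}=(N+1)^{D/2}$ must be controlled, and for this one should take $N$ of order $1/\ve$ (a constant), not of order $d(x,y)^2/(\ve t)$ as you write. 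With $N\asymp 1/\ve$ the prefactor is simply absorbed into $C(\ve)$; with your choice it becomes a power of $d(x,y)^2/t$, which can still be absorbed into the Gaussian but at the price of a further loss in $\ve$, and one must also handle separately the regime $d(x,y)^2\lesssim \ve t$ where your $N$ would drop below $1$.

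For the upper bound, the paper does not redo the Davies argument at all: the symmetric estimate
\[
p(x,y,t)\ \le\ \frac{C(\ve')}{\mu(B(x,\sqrt t))^{1/2}\mu(B(y,\sqrt t))^{1/2}}\exp\!\left(-\frac{d(x,y)^2}{(4+\ve')t}\right)
\]
was already established in \cite{BG}, and the only new ingredient here is Theorem~\ref{T:uniformity}, used to replace $\mu(B(y,\sqrt t))$ by $\mu(B(x,\sqrt t))$ at the cost of a polynomial factor $(1+d(x,y)/\sqrt t)^{Q/2}$ that is then absorbed into the exponential by taking $\ve'<\ve$. Your self-contained approach via Lemma~\ref{L:aronson} is the right circle of ideas and you correctly identify the sharp constant $4+\ve$ as the delicate point; this is precisely what the Davies--Gaffney method achieves, and it is what was carried out in \cite{BG}.
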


\begin{proof}
We begin by establishing the lower bound. First, from Proposition \ref{P:harnack} we obtain for all $y \in \bM$, $t>0$, and every $0<\ve <1$,
\beas 
p(x,y,t)&\geq& p(x,x,\ve t)  \ve^\frac{D}{2} \exp\left( -\frac{D}{m}\frac{d(x,y)^2}{(4-\ve)t}\right).
\eeas
We thus need to estimate $p(x,x,\ve t)$ from below. But this has already been done in \eqref{odlb}. Choosing $r>0$ such that $2r^2 = \ve t$, we obtain from that estimate
\[
p(x,x,\ve t) \ge \frac{C^*}{\mu(B(x,\sqrt{\ve/2} \sqrt t))},\ \ \ \ \ x\in \bM,\ t>0.
\]
On the other hand, since $\sqrt{\ve/2}<1$, by the trivial inequality $\mu(B(x,\sqrt{\ve/2} \sqrt t)) \le \mu(B(x,\sqrt t))$, we conclude
\[
p(x,y,t) \geq \frac{C^*}{ \mu(B(x,\sqrt t))}  \ve^\frac{D}{2} \exp\left( -\frac{D}{m}\frac{d(x,y)^2}{(4-\ve)t}\right).
\]
This proves the Gaussian lower bound.


For the Gaussian upper bound, we first observe that the following upper bound is proved in \cite{BG}:
$$
p(x,y,t)\le \frac{C(m,\kappa,\rho_2,\ve')}{\mu(B(x,\sqrt
t))^{\frac{1}{2}} \mu(B(y,\sqrt
t))^{\frac{1}{2}}} \exp
\left(-\frac{d(x,y)^2}{(4+\ve')t}\right).
$$
At this point, by the triangle inequality and Theorem \ref{T:uniformity} we find.
\beas
\mu(B(x,\sqrt{ t})) &\leq& \mu(B(y,d(x,y)+\sqrt{ t}))\\
                 &\leq& C_1 \mu(B(y,\sqrt{ t})) \left(\frac {d(x,y)+\sqrt{ t}}{\sqrt t} \right)^Q.
\eeas
This gives
$$
\frac{1}{\mu(B(y,\sqrt{ t}))}\leq \frac{C_1}{\mu(B(x,\sqrt{ t}))} \left(\frac {d(x,y)}{\sqrt{ t}}+1 \right)^Q.
$$
Combining this with the above estimate we obtain
\[
p(x,y,t)\le \frac{C_1^{1/2}C(m,\kappa,\rho_2,\ve')}{\mu(B(x,\sqrt
t))}  \left(\frac {d(x,y)}{\sqrt{ t}}+1 \right)^{\frac{Q}{2}} \exp
\left(-\frac{d(x,y)^2}{(4+\ve')t}\right).
\]
If now $0<\ve<1$, it is clear that we can choose $0<\ve'<\ve$ such that 
\[
\frac{C_1^{1/2}C(m,\kappa,\rho_2,\ve')}{\mu(B(x,\sqrt
t))}  \left(\frac {d(x,y)}{\sqrt{ t}}+1 \right)^{\frac{Q}{2}} \exp
\left(-\frac{d(x,y)^2}{(4+\ve')t}\right) \le  \frac{C^*(m,\kappa,\rho_2,\ve)}{\mu(B(x,\sqrt
t))} \exp
\left(-\frac{d(x,y)^2}{(4+\ve)t}\right),
\]
where $C^*(m,\kappa,\rho_2,\ve)$ is a constant which tends to $\infty$ as $\ve \to 0^+$. The desired conclusion follows by suitably adjusting the values of both $\ve'$ and of the constant in the right-hand side of the estimate.

\end{proof}

With Theorems \ref{T:doubling} and \ref{T:gb} in hands, we can now appeal to the results in \cite{FS}, \cite{KS3}, \cite{Gri},  \cite{SC}, \cite{St1}, \cite{St2}, \cite{St3}, see also the books \cite{GSC}, \cite{Gri2}. More precisely, from the developments in these papers it is by now well-known that in the context of strictly regular local Dirichlet spaces we have the equivalence between:

\

\begin{enumerate}
\item[(1)] A two sided Gaussian bounds for the heat kernel (like in Theorem \ref{T:gb});
\item[(2)] The  conjunction of the volume doubling property and the Poincar\'e inequality (see Theorem \ref{T:P});
\item[(3)] The parabolic Harnack inequality (see Theorem \ref{T:H}).
\end{enumerate}

\

For uniformly parabolic equations in divergence form the equivalence between $(1)$ and $(3)$ was first proved in \cite{FS}. The fact that $(1)$ implies the volume doubling property is almost straightforward, the argument may be found in \cite{SCbook} p. 161. The fact that $(1)$ also implies the Poincar\'e inequality relies on a beautiful and general argument by Kusuoka and Stroock \cite{KS3}, pp. 434-435. The equivalence between $(2)$ and $(3)$ originates from \cite{Gri} and   \cite{SC} and has been worked out in the context of strictly local regular Dirichlet spaces in \cite{St3}. Finally, the fact that $(2)$ implies $(1)$ is also proven in \cite{St3}.

\

Thus, in our framework, thanks to Theorem \ref{T:gb}  we obtain the following weaker form of Poincar\'e inequality.
Of course we already know the volume doubling property since we proved it to obtain the Gaussian estimates.
 
\begin{theorem}\label{T:P}
There exists a constant $C = C(m,\kappa,\rho_2)>0$ such that for every $x\in \M, r>0$, and $f\in C^\infty(\M)$ one has
\[
\int_{B(x,r)} |f(y) - f_r|^2 d\mu(y) \le C r^2 \int_{B(x,2r)} \Gamma(f)(y) d\mu(y),
\]
where we have let $f_r = \frac{1}{\mu(B(x,r))} \int_{B(x,r)} f d\mu$.
\end{theorem}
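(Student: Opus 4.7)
The plan is to invoke directly the equivalence, valid in the setting of strictly local regular Dirichlet spaces, between the conjunction of volume doubling and two-sided Gaussian heat kernel bounds on the one hand, and the conjunction of volume doubling and the weak $L^2$ Poincar\'e inequality on the other. This equivalence was worked out across \cite{Gri}, \cite{SC}, \cite{St3}, with additional contributions in \cite{FS}, \cite{KS3}, \cite{St1}, \cite{St2}, and expositions in \cite{GSC}, \cite{Gri2}; the authors essentially announce in the paragraph preceding the theorem that they intend to cite this machinery, so the proof reduces to verifying that our framework meets the hypotheses and then quoting the result.

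To execute the plan I would first check that the setting of Section \ref{SS:frame} is a strictly local regular Dirichlet space. The quadratic form $\mathcal E(f,g) = \int_\M \Gamma(f,g)\, d\mu$, initially defined on $C^\infty_0(\M)$, is closable, symmetric, Markovian, and strictly local with carr\'e du champ $\Gamma$, owing to the symmetry and local subellipticity of $L$ together with the essential self-adjointness recalled in Section \ref{SS:prelim}. Having verified this, Theorem \ref{T:doubling} supplies the global doubling property and Theorem \ref{T:gb} supplies the two-sided Gaussian bound on $p(x,y,t)$. By the cited equivalence the weak $L^2$ Poincar\'e inequality claimed in Theorem \ref{T:P} then follows.

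If one wished to avoid black-boxing the equivalence, the heart of the matter is the Kusuoka-Stroock decomposition: for $t = c r^2$ write
\[
f - f_r \;=\; (f - P_{t} f) \;+\; (P_{t} f - f_r),
\]
bound the first summand in $L^2(B(x,r))$ by the spectral identity $\|f - P_t f\|_{L^2(\M)}^2 \le t\, \mathcal E(f,f)$ combined with a cut-off localization that is responsible for the enlarged ball $B(x,2r)$ appearing on the right-hand side, and bound the second summand by exploiting the smoothness of $P_{t} f$ through the Gaussian upper bound of Theorem \ref{T:gb} together with doubling to show that $P_t f$ is essentially constant on $B(x,r)$, with oscillation controlled by $r^2 \int_{B(x,2r)} \Gamma(f)\, d\mu$.

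The main obstacle in the self-contained route is precisely the localization step, which forces the radius on the right-hand side to be doubled and accounts for the ``weak'' nature of the inequality here; the passage to the strong form with identical balls on both sides (as in Theorem \ref{T:main}) is not attempted at this stage and is instead obtained through the length-space chaining argument alluded to in the introduction.
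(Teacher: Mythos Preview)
Your proposal is correct and matches the paper's own approach: the paper does not give a self-contained argument for Theorem \ref{T:P} but simply invokes the equivalence (1)$\Leftrightarrow$(2)$\Leftrightarrow$(3) in strictly local regular Dirichlet spaces, citing \cite{FS}, \cite{KS3}, \cite{Gri}, \cite{SC}, \cite{St1}, \cite{St2}, \cite{St3}, \cite{GSC}, \cite{Gri2}, after having established the two-sided Gaussian bounds in Theorem \ref{T:gb} and doubling in Theorem \ref{T:doubling}. Your sketch of the Kusuoka--Stroock route is a bonus that the paper only alludes to via the citation \cite{KS3}.
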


Since thanks to Theorem \ref{T:doubling} the space $(\M,\mu,d)$, where $d = d(x,y)$ indicates the sub-Riemannian distance, is a space of homogeneous type, and it is also a \emph{length-space} in the sense of Gromov, arguing as in \cite{J} we now conclude with the following result.

\begin{corollary}\label{C:Pi}
There exists a constant $C^* = C^*(m,\kappa,\rho_2)>0$ such that for every $x\in \M, r>0$, and $f\in C^\infty(\M)$ one has
\[
\int_{B(x,r)} |f(y) - f_r|^2 d\mu(y) \le C^* r^2 \int_{B(x,r)} \Gamma(f)(y) d\mu(y).
\]
\end{corollary}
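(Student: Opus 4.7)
The goal is to sharpen the weak Poincaré inequality of Theorem \ref{T:P}, in which the right-hand side integrates $\Gamma(f)$ over the dilated ball $B(x,2r)$, to the strong version with the same ball $B(x,r)$ on both sides. My plan is to implement the classical self-improvement argument of D.~Jerison \cite{J}, which relies on only two inputs already established in the paper: the global volume doubling property (Theorem \ref{T:doubling}) and the fact that $(\M,d)$ is a length space, since any two points are joined by a subunit curve.

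The first step is to construct, for a fixed ball $B = B(x_0,r)$, a Whitney-type cover $\{B_j = B(x_j, r_j)\}$ of $B$ with $r_j$ comparable to $\mathrm{dist}(x_j, \M \setminus B)$, such that the dilated balls $2 B_j$ still lie inside $B$ and the family $\{2 B_j\}$ has bounded overlap $N$ depending only on $C_1$. This construction is standard in doubling metric spaces. Applying Theorem \ref{T:P} on each $B_j$ yields
\[
\int_{B_j} |f - f_{B_j}|^2 d\mu \le C r_j^2 \int_{2 B_j} \Gamma(f) d\mu,
\]
with all right-hand integrals now localized to subsets of $B$.

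Next, using the length-space structure combined with doubling, for $\mu$-a.e.\ $y \in B$ I would build a chain of Whitney balls $B_{j_0} \ni y, B_{j_1}, \dots, B_{j_k}$ connecting $y$ to a fixed ball comparable to $B$, where consecutive balls overlap in a controlled way, radii grow geometrically, and $k = O(\log(r/r_{j_0}))$. Such a chain is obtained by following a nearly length-minimizing subunit path from $y$ toward $x_0$ and subdividing it into Whitney pieces. Telescoping the identity
\[
f_{B_{j_0}} - f_B = \sum_{i=0}^{k-1} (f_{B_{j_i}} - f_{B_{j_{i+1}}}),
\]
and bounding each consecutive difference by Theorem \ref{T:P} applied on a common ball comparable to both members of the pair, I obtain $|f_{B_{j_i}} - f_{B_{j_{i+1}}}| \le C r_{j_i} \bigl( \mu(B_{j_i})^{-1} \int_{2 B_{j_i}} \Gamma(f) d\mu \bigr)^{1/2}$. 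Squaring via Cauchy--Schwarz with weights $r_{j_i}$, summing the resulting geometric series in the radii, and integrating $|f(y) - f_B|^2 \le 2|f(y) - f_{B_{j_0}}|^2 + 2 |f_{B_{j_0}} - f_B|^2$ over $B$ produces, after use of the bounded overlap of $\{2 B_j\}$,
\[
\int_B |f - f_B|^2 d\mu \le C^* r^2 \sum_j \int_{2 B_j} \Gamma(f) d\mu \le C^* N r^2 \int_B \Gamma(f) d\mu,
\]
which is the desired conclusion.

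The main technical obstacle is the construction of the chain with controlled geometry: one must simultaneously ensure that the chain remains inside $B$, that consecutive balls are of comparable size so that the weak Poincaré inequality transfers the average difference to an integral of $\Gamma(f)$ on a ball still contained in $B$, and that radii decay geometrically so the telescoping sum is controlled. The length-space property is precisely what supplies the connecting subunit path needed for this construction, while doubling controls the combinatorial accounting. Once the chain is in hand, the remainder is a routine exercise in spaces of homogeneous type, and this is why the author is content to write ``arguing as in \cite{J}''.
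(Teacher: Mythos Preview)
Your proposal is correct and follows essentially the same approach as the paper: the authors simply invoke Theorem \ref{T:doubling} to conclude that $(\M,d,\mu)$ is a space of homogeneous type and then write ``arguing as in \cite{J}'', and what you have sketched is precisely Jerison's chaining argument that this citation refers to. Your identification of the two key ingredients---volume doubling and the length-space property coming from subunit curves---matches exactly what is needed to run the Whitney decomposition and chain construction.
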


Furthermore, the following scale invariant Harnack inequality for local solutions holds.

\begin{theorem}\label{T:H}
If $u$ is a positive solution of the heat equation in a cylinder of the form $Q=(s,s+\alpha r^2) \times B(x,r)$ then
\begin{equation}\label{Ha}
\sup_{Q-} u\le C \inf_{Q+} u,
\end{equation}
where for some fixed $0 < \beta < \gamma <\delta<\alpha<\infty$ and $\eta \in (0,1)$,
\[
Q-=(s+\beta r^2,s+\gamma r^2)\times B(x,\eta r), Q+=(s+\delta r^2, s+\alpha r^2)\times B(x,\eta r).
\]
Here, the constant $C$ is independent of $x,r$ and $u$, but depends on the parameters $m, \kappa, \rho_2$, as well as on $\alpha, \beta, \gamma, \delta$ and $\eta$.
\end{theorem}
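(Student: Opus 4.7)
The plan is to invoke the classical equivalence, in the framework of strongly local regular Dirichlet spaces, between the two-sided Gaussian bounds, the conjunction of volume doubling with a scale-invariant $L^2$ Poincar\'e inequality, and the scale-invariant parabolic Harnack inequality \eqref{Ha}. Three of the ingredients in this equivalence have already been secured: the global doubling property of Theorem \ref{T:doubling}, the Poincar\'e inequality of Corollary \ref{C:Pi} (which is of the "same ball" type, as is needed for the standard Moser/Grigor'yan--Saloff-Coste argument), and the two-sided Gaussian estimate of Theorem \ref{T:gb}. Thus the proof reduces to invoking the general results from \cite{Gri}, \cite{SC}, \cite{St1}, \cite{St2}, \cite{St3} once one verifies that our setup fits that framework.

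First, I would recast the data $(\M,\mu,L)$ as a strongly local regular symmetric Dirichlet space. The bilinear form $\mathcal{E}(f,g)=\int_{\M}\Gamma(f,g)\,d\mu$, initially defined on $C^\infty_0(\M)$, is non-negative, symmetric, and closable on $L^2(\M,\mu)$ by assumptions (i)--(iii) of Section \ref{SS:frame}; its closure is a Dirichlet form whose generator agrees with $L$ on $C^\infty_0(\M)$. Strong locality is immediate from the derivation property of $\Gamma$, and regularity (density of $C_0(\M)\cap \mathcal D(\mathcal E)$ in $C_0(\M)$) follows from the completeness of $(\M,d)$ together with the cutoff sequence $h_k$ provided by hypothesis (H.1). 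Moreover, any positive smooth solution $u$ of $Lu=u_t$ in the cylinder $Q$, as in the statement of Theorem \ref{T:H}, is automatically a positive local weak solution in the Dirichlet-form sense, and conversely weak solutions are classical by hypoellipticity of $L-\partial_t$. Hence the notion of ``solution'' used in \cite{St3} coincides with the one in the statement.

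Second, with this framework in place, I would apply Sturm's theorem (Theorem 3.5 of \cite{St3}; see also \cite{SC}, \cite{Gri}): in a strongly local regular Dirichlet space, volume doubling together with the scale-invariant $L^2$ Poincar\'e inequality
\[
\int_{B(x,r)}|f-f_r|^2\,d\mu \le C^*\,r^2\int_{B(x,r)}\Gamma(f)\,d\mu
\]
is equivalent to the parabolic Harnack inequality \eqref{Ha} on every parabolic cylinder, with constant depending only on the doubling constant, the Poincar\'e constant, and the geometric parameters $\alpha,\beta,\gamma,\delta,\eta$. Alternatively, one may invoke the Fabes--Stroock scheme \cite{FS} that extracts \eqref{Ha} directly from the Gaussian two-sided bounds of Theorem \ref{T:gb}. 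Either route yields the claim.

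The only potentially delicate point is the formulation of \eqref{Ha} with \emph{arbitrary} admissible parameters $0<\beta<\gamma<\delta<\alpha$ and $\eta\in(0,1)$: the general theorems give this type of estimate with some admissible choice, and a standard chaining argument based on the Markov property of $P_t$, iteration of the basic Harnack step, and the doubling property produces the asserted dependence. I expect this bookkeeping to be the only technical obstacle, but it is well documented in \cite{SC}, \cite{Gri2}, \cite{GSC}, so no new idea is required. The constant $C$ obtained in this manner depends only on $d,\kappa,\rho_2$ and on $\alpha,\beta,\gamma,\delta,\eta$, as claimed.
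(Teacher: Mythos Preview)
Your proposal is correct and follows essentially the same route as the paper: Theorem \ref{T:H} is not proved directly but is obtained by citing the equivalence, in the setting of strongly local regular Dirichlet spaces, between the two-sided Gaussian bounds of Theorem \ref{T:gb} (together with the doubling and Poincar\'e results) and the parabolic Harnack inequality, referring to \cite{FS}, \cite{KS3}, \cite{Gri}, \cite{SC}, \cite{St1}--\cite{St3}. Your additional remarks verifying that $(\M,\mu,\Gamma)$ fits Sturm's framework and that weak and classical solutions coincide by hypoellipticity are exactly the checks one should make, and the chaining observation for general parameters is standard.
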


\section{$L$-harmonic functions with polynomial growth}\label{S:polgro}


In \cite{BG} the first and third named authors were able to establish a Yau type Liouville theorem stating that when $\M$ is complete, and the generalized curvature dimension inequality  \emph{CD}$(\rho_1,\rho_2,\kappa,m)$ holds for $\rho_1 \ge 0$, then there exist no bounded solutions of $Lf = 0$ on $\M$ besides the constants. Note that this result is weaker than Yau's original Riemannian result in \cite{Yau} since this author only assumes a one-side bound. However, as a consequence of Theorems \ref{T:doubling} and \ref{T:H} we can now remove such limitation and obtain the following complete sub-Riemannian analogue of Yau's Liouville theorem. 

\begin{theorem}\label{T:lio}
 There exist no positive solutions of $Lf = 0$ on $\M$ besides the constants.
\end{theorem}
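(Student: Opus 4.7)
The strategy is to convert the parabolic Harnack inequality of Theorem \ref{T:H} into a scale-invariant elliptic Harnack inequality for positive $L$-harmonic functions, then run the classical Moser oscillation argument to conclude.

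\textbf{Step 1 (Elliptic Harnack).} Let $f>0$ satisfy $Lf=0$ on $\M$. Viewing $f$ as a time-independent positive solution of the heat equation on any cylinder $Q=(s,s+\alpha r^2)\times B(x,r)$, Theorem \ref{T:H} gives
\[
\sup_{B(x,\eta r)} f = \sup_{Q-}f \le C \inf_{Q+}f = C \inf_{B(x,\eta r)} f,
\]
with $C$ depending only on $d,\kappa,\rho_2$ and the fixed choice of cylinder parameters (say $\eta=\tfrac12$), but independent of $x$ and $r$.

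\textbf{Step 2 (Global boundedness).} Fix $x_0\in\M$ and apply Step 1 on $B(x_0,r)$:
\[
\sup_{B(x_0,r/2)} f \le C \inf_{B(x_0,r/2)} f \le C f(x_0).
\]
Letting $r\to\infty$ yields $\sup_\M f \le C f(x_0)<\infty$, so $f$ is globally bounded.

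\textbf{Step 3 (Oscillation decay).} Set $M(r)=\sup_{B(x_0,r)}f$, $m(r)=\inf_{B(x_0,r)}f$, and $\omega(r)=M(r)-m(r)$. The functions $f-m(2r)$ and $M(2r)-f$ are nonnegative and $L$-harmonic on $B(x_0,2r)$; applying the elliptic Harnack inequality of Step 1 at scale $2r$ to each of them gives
\[
M(r)-m(2r) \le C\bigl(m(r)-m(2r)\bigr),\qquad M(2r)-m(r) \le C\bigl(M(2r)-M(r)\bigr).
\]
Adding these and collecting terms,
\[
\omega(r)+\omega(2r) \le C\bigl[\omega(2r)-\omega(r)\bigr],
\]
hence $\omega(r)\le\theta\,\omega(2r)$ with $\theta=(C-1)/(C+1)<1$. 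Iterating $k$ times yields $\omega(r)\le \theta^k \omega(2^k r)\le \theta^k(\sup_\M f-\inf_\M f)$. By Step 2 the right-hand side is finite, so letting $k\to\infty$ gives $\omega(r)=0$ for every $r>0$. Thus $f$ is constant on every ball $B(x_0,r)$, and therefore on $\M$.

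The only non-routine point is the passage from the parabolic Harnack of Theorem \ref{T:H} to a genuinely scale-invariant elliptic Harnack inequality with a constant independent of the radius $r$; but this is immediate from the statement of Theorem \ref{T:H}, so the proof reduces to a standard Moser-type oscillation iteration. Note that, in contrast with Yau's original Riemannian argument which relies on gradient estimates, this route uses only the Harnack inequality already established in the paper and therefore transplants without modification to the sub-Riemannian setting governed by the generalized curvature-dimension inequality \emph{CD}$(\rho_1,\rho_2,\kappa,d)$ with $\rho_1\ge 0$.
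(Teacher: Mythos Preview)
Your argument is correct and is precisely the standard route the paper has in mind: the text gives no detailed proof of Theorem \ref{T:lio}, merely stating it ``as a consequence of Theorem \ref{T:H}'', and you have supplied exactly those details---derive a scale-invariant elliptic Harnack inequality by viewing $f$ as a stationary caloric function, then run Moser's oscillation iteration.

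One minor technicality worth tightening: in Step 3 you apply the Harnack inequality to $f-m(2r)$ and $M(2r)-f$, which are only \emph{nonnegative} on $B(x_0,2r)$, while Theorem \ref{T:H} is stated for \emph{positive} solutions. The standard fix is to apply the inequality to $f-m(2r)+\varepsilon$ (which is a positive solution since $L1=0$) and let $\varepsilon\to 0^+$; the estimates $M(r)-m(2r)\le C(m(r)-m(2r))$ etc.\ survive the limit. With that adjustment the argument is complete.
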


In fact, we can now prove much more. In their celebrated work \cite{CM} Colding and Minicozzi obtained a complete resolution of Yau's famous conjecture that the space of harmonic functions with a fixed polynomial growth at infinity on an open manifold with Ric $\ge 0$ is finite dimensional.
A fundamental discovery in that paper is the fact that such property can be solely derived from the volume doubling condition and the Neumann-Poincar\'e inequality. In Theorem 8.1 in \cite{CM} the authors, assuming these two properties, present a generalization of their result to sub-Riemannian manifolds. However, at the time \cite{CM} was written the only application of such theorem that could be given was to Lie groups with polynomial volume growth, see Corollary 8.2 in that paper. 

If we combine Theorem \ref{T:doubling} and Corollary \ref{C:Pi} above with the cited Theorem 8.1 in \cite{CM}, we can considerably broaden the scope of Colding and Minicozzi's result and generalize it to the geometric framework covered by the present paper. We obtain in fact the following generalization of Yau's conjecture. Given a fixed base point $x_0\in \M$, and a number $N\in \mathbb N$, we will indicate with $\mathcal H_N(\M,L)$ the linear space of all solutions of $Lf = 0$ on $\M$ such that there exist a constant $C<\infty$ for which
\[
|f(x)| \le C(1 + d(x,x_0)^N),\ \ \ \ \ x\in \M.
\]

\begin{theorem}\label{T:cm}
For every $N\in \mathbb N$ one has: \emph{dim} $\mathcal H_N(\M,L) < \infty$.
\end{theorem}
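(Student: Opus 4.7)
The plan is to derive Theorem \ref{T:cm} as an immediate consequence of the general finite-dimensionality result of Colding and Minicozzi (Theorem 8.1 in \cite{CM}), once we verify that the hypotheses of that theorem are met in our sub-Riemannian setting. Recall that the Colding--Minicozzi theorem is structural in nature: given a metric measure space together with a suitable notion of ``gradient'' and a class of ``harmonic'' functions, it asserts finite dimensionality of the space of harmonic functions of polynomial growth of any fixed order, provided one has (i) a global volume doubling property for $\mu$ on metric balls, and (ii) a scale-invariant $L^2$ Neumann-Poincar\'e inequality relating the oscillation of a function on a ball to its energy on the same (or a fixed dilate of the same) ball.

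First I would check that our framework $(\M, d, \mu, L)$ fits the Colding--Minicozzi setup. The distance $d$ is the sub-Riemannian (equivalently, the subunit) distance, $\mu$ is the smooth reference measure, the energy is given by the \textit{carr\'e du champ} $\Gamma$, and the harmonic functions are solutions of $Lf=0$. Since $L$ is locally subelliptic and hypoelliptic, these harmonic functions are smooth, and the standard Caccioppoli-type arguments on metric balls required to run the Colding--Minicozzi scheme apply verbatim (the Dirichlet form $\mathcal E(f)=\int_\M \Gamma(f)\,d\mu$ is strictly local and regular).

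Second, I would point out that the two structural inputs needed by Colding--Minicozzi are exactly what the preceding sections of the present paper provide. Namely, Theorem \ref{T:doubling} yields the global doubling condition
\[
\mu(B(x,2r)) \le C_1 \mu(B(x,r)), \qquad x\in \M,\ r>0,
\]
with $C_1=C_1(\rho_1,\rho_2,\kappa,d)$ depending only on the parameters in the generalized curvature-dimension inequality. Corollary \ref{C:Pi} supplies the Neumann-Poincar\'e inequality on a single ball:
\[
\int_{B(x,r)} |f - f_r|^2\, d\mu \le C^\ast\, r^2 \int_{B(x,r)} \Gamma(f)\, d\mu,
\]
for every $f\in C^\infty(\M)$, $x\in \M$ and $r>0$, with $C^\ast$ depending only on $d, \kappa, \rho_2$. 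Together with the fact, used in Section \ref{S:ogb}, that $(\M,\mu,d)$ is a space of homogeneous type with $d$ a genuine length metric, these two estimates constitute precisely the hypotheses of Theorem 8.1 in \cite{CM}.

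Finally I would invoke that theorem, which then asserts that for every fixed $x_0\in \M$ and every $N\in \mathbb N$, the space of solutions $f$ of $Lf=0$ satisfying $|f(x)| \le C(1+d(x,x_0)^N)$ is finite dimensional. This is exactly the statement that $\dim\,\mathcal H_N(\M,L) < \infty$. The only point that requires a little care, and which I view as the sole (mild) obstacle, is that the Colding--Minicozzi argument is written abstractly and one must match our notion of energy $\Gamma(f)$ with the upper gradient used in \cite{CM}; but since $\sqrt{\Gamma(f)}$ is a minimal upper gradient for the Dirichlet form and since the doubling/Poincar\'e pair above is formulated in terms of this very energy, the match is transparent. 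Explicit bounds on $\dim\,\mathcal H_N(\M,L)$ in terms of $N$ and the doubling/Poincar\'e constants could in principle be extracted from the proof in \cite{CM}, but for the qualitative statement we need, the above invocation suffices.
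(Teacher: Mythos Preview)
Your proposal is correct and follows exactly the same approach as the paper: the paper's argument consists precisely of combining Theorem~\ref{T:doubling} and Corollary~\ref{C:Pi} with Theorem~8.1 in \cite{CM}. The additional remarks you make about matching $\sqrt{\Gamma(f)}$ with the upper gradient and about Caccioppoli-type inputs are reasonable elaborations, but the paper itself simply states the combination without further detail.
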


\vskip 0.3in


\begin{thebibliography}{99}


\bibitem[AL]{AL}
A. Agrachev \& P. Lee, \emph{Generalized Ricci curvature bounds for three dimensional contact sub-Riemannian manifolds}, Arxiv preprint http://arxiv.org/pdf/0903.2550.pdf, (2009).




\bibitem[AT]{AT}
L. Ambrosio \& P. Tilli, \emph{Topics on analysis in metric spaces}, Oxford Lecture Series in Mathematics and its Applications, \textbf{25}. Oxford University Press, Oxford, 2004. viii+133 pp.


\bibitem[B1]{BCRAS} 
D. Bakry, \emph{Un crit\`ere de non-explosion pour certaines diffusions sur une vari\'et\'e riemannienne compl\`ete.}C. R. Acad. Sci. Paris S\'er. I Math. 303 (1986), no. 1, 23Ð26.

\bibitem[B2]{bakry-stflour}
\bysame, \emph{L'hypercontractivit\'e et son utilisation en
th\'eorie des semigroupes}, Ecole d'Et\'e de Probabilites de
St-Flour, Lecture Notes in Math, (1994).


\bibitem[B3]{bakry-tata}
\bysame, \emph{Functional inequalities for Markov
semigroups}. Probability measures on groups: recent directions and
  trends, 91--147, Tata Inst. Fund. Res., Mumbai, 2006.





\bibitem[BE]{Bakry-Emery} 
D. Bakry \& M. Emery, \emph{Diffusions hypercontractives}, S\'emin. de probabilit\'es XIX, Univ. Strasbourg, Springer, 1983.





\bibitem[BB2]{BB2}
F. Baudoin \& M. Bonnefont, \emph{Log-Sobolev inequalities for subelliptic operators satisfying a generalized curvature dimension inequality}, Journal of Functional Analysis, Volume \textbf{262}~(2012), 2646--2676.


\bibitem[BG1]{BG} 
F. Baudoin \& N. Garofalo, \emph{Curvature-dimension inequalities and Ricci lower bounds for sub-Riemannian manifolds with transverse symmetries}, Arxiv preprint, http://arxiv.org/abs/1101.3590

\bibitem[BG2]{BG2}
\bysame, \emph{Perelman's entropy and doubling property on Riemannian manifolds}, J. Geom. Anal., to appear.





\bibitem[Bu]{Bu}
P. Buser, \emph{A note on the isoperimetric constant}, Ann. Scient.
de l'\'E.N.S. $4^e$ s\'erie, \textbf{15}, no. 2 (1982), 213-230.




\bibitem[CKS]{CKS}
E. Carlen, S. Kusuoka \& D. Stroock, \emph{Upper bounds for
symmetric Markov transition functions},
Ann. Inst. H. Poincar\'e Probab. Statist.  \textbf{23}~ (1987),  no. 2, suppl.,
245--287.


\bibitem[Cha]{Cha}
I. Chavel, \emph{Riemannian geometry: a modern introduction},
Cambridge Tracts in Mathematics, vol. 108, Cambridge Univ. Press,
1993.

\bibitem[Che]{Che}
J. Cheeger, \emph{Differentiability of Lipschitz functions on
              metric measure spaces}, Geom. Funct. Anal. \textbf{9}~(1999), 428--517.


\bibitem[CW]{CW}
R. Coifman \& G. Weiss, \emph{Analyse Harmonique Non-Commutative sur Certains Espaces Homog\`enes}, Lecture Notes in Math., \textbf{242}, Springer-Verlag, 1971.

\bibitem[CM]{CM}
T. H. Colding \& W. P. Minicozzi, II, \emph{Harmonic functions on manifolds}, Ann. of Math. (2) \textbf{146}~(1997), no. 3, 725--747.

\bibitem[DGN]{DGN}
D. Danielli, N. Garofalo \& D. M. Nhieu, \emph{Trace inequalities for Carnot-Carath\'eodory spaces and applications}, Ann. Scuola Norm. Sup. Pisa Cl. Sci. (4) \textbf{27}~(1998), no. 2, 195Ð252 (1999).


\bibitem[De]{De}
M. Derridj, \emph{Un probl\`eme aux limites pour une classe d'op\'erateurs du second ordre hypoelliptiques}, (French) Ann. Inst. Fourier (Grenoble) \textbf{21}~(1971), no. 4, 99-148.


\bibitem[FS]{FS}
E. B. Fabes \& D. Stroock, \emph{A new proof of Moser's parabolic Harnack inequality using the old ideas of Nash}, Arch. Rational Mech. Anal. 96 (1986), no. 4, 327--338.

\bibitem[Fe]{Fe}
H. Federer, \emph{Geometric measure theory}, Die Grundlehren der mathematischen Wissenschaften, Band 153 Springer-Verlag New York Inc., New York 1969 xiv+676 pp. 

\bibitem[FP]{FP}
C. Fefferman \& D. H. Phong, \emph{Subelliptic eigenvalue problems},
Conference on harmonic analysis in honor of Antoni Zygmund, Vol. I,
II (Chicago, Ill., 1981),  590--606, Wadsworth Math. Ser.,
Wadsworth, Belmont, CA, 1983.

\bibitem[FSC]{FSC}
C. L. Fefferman \& A. S\'anchez-Calle, \emph{Fundamental solutions
for second order subelliptic operators},  Ann. of Math. (2)
\textbf{124}~(1986), no. 2, 247--272.

\bibitem[FS]{FS}
G. B. Folland \& E. M. Stein, \emph{Hardy spaces on homogeneous groups}, Mathematical Notes, \textbf{28}. Princeton University Press, Princeton, N.J.; University of Tokyo Press, Tokyo, 1982. xii+285 pp. 



\bibitem[Gri1]{Gri} 
A. A. Grigor'yan,  \emph{The heat equation on noncompact Riemannian manifolds}, (Russian) Mat. Sb. \textbf{182}~(1991), no. 1, 55--87; translation in Math. USSR-Sb. \textbf{72}~(1992), no. 1, 47--77.

\bibitem[Gri2]{Gri2}
\bysame, \emph{Heat Kernel and Analysis on Manifolds}, Amer. Math. Soc., Internat. Press, Studies in Adv. Math. Vol. 47, 2009.





\bibitem[GSC]{GSC}
P. Gyrya \& L. Saloff-Coste, \emph{Neumann and Dirichlet heat kernels in inner uniform domains}, preliminary notes.


\bibitem[Ha]{Ha}
P. Hajlasz, \emph{Sobolev spaces on metric-measure spaces. Heat kernels and analysis on manifolds, graphs, and metric spaces}, (Paris, 2002), 173Ð218, Contemp. Math., \textbf{338}, Amer. Math. Soc., Providence, RI, 2003.

\bibitem[HK]{HK}
P. Hajlasz \& P. Koskela, \emph{Sobolev met Poincar\'e}, Mem. Amer. Math. Soc. \textbf{145}~(2000), no. 688, x+101 pp. 

\bibitem[He]{He}
J. Heinonen, \emph{Lectures on analysis on metric spaces}, Universitext. Springer-Verlag, New York, 2001. x+140 pp.

\bibitem[HeK]{HeK}
J. Heinonen \& P. Koskela, \emph{Quasiconformal maps in metric spaces with controlled geometry},
Acta Math. \textbf{181}~(1998),1--61.


\bibitem[HR]{HiR} 
M. Hino \& J. Ramirez, \emph{ Small-time Gaussian behavior of symmetric diffusion semigroups}, The Annals of Probability,, Vol. \textbf{31}, No. 3,  (2003), 1254Ð1295

\bibitem[H]{H}
H. H\"ormander, \emph{Hypoelliptic second-order differential equations}, Acta Math., \textbf{119}~(1967), 147-171.





\bibitem[J]{J}
D. S. Jerison, \emph{The Poincar\'e inequality for vector fields
satisfying H\"ormander's condition}, Duke Math. J.,
\textbf{53}~(1986), 503-523.


\bibitem[JSC1]{JSC}
D. Jerison \& A. S\'anchez-Calle, \emph{Estimates for the heat
kernel for a sum of squares of vector fields}, Indiana Univ. Math.
J., \textbf{35}~(1986), no.4, 835-854.


\bibitem[KS1]{KS3} 
S. Kusuoka \& D. Stroock, \emph{Applications of the Malliavin calculus. III.},
J. Fac. Sci. Univ. Tokyo Sect. $I$A Math. \textbf{34}~(1987), no. 2, 391--442. 





\bibitem[LV]{LV} 
J. Lott \& C. Villani, \emph{Ricci
    curvature for metric-measure spaces via optimal transport}, Annals of Math., \textbf{169}~(2009), 3., 903-991.




\bibitem[NSW]{NSW}
 A. Nagel, E.M. Stein \& S. Wainger, \emph{Balls and metrics defined by vector fields I: basic properties}, Acta Math. \textbf{155}~(1985), 103-147.


\bibitem[OR]{OR}
O. A. Oleinik \& E. V. Radkevic,  \emph{Second order equations with nonnegative characteristic form}, Translated from the Russian by Paul C. Fife. Plenum Press, New York-London, 1973.


\bibitem[PS]{PS}
R. S. Phillips \& L. Sarason, \emph{Elliptic-parabolic equations of the second order}, J. Math. Mech. \textbf{17}~1967/1968, 891-917.






\bibitem[SC]{SC}
L. Saloff-Coste, \emph{A note on Poincar\'e, Sobolev, and Harnack inequalities}, Internat. Math. Res. Notices 1992, no. 2, 27--38.

\bibitem[SC2]{SCbook}
L. Saloff-Coste, \emph{Aspects of Sobolev-Type Inequalities}, Cambridge University Press, London  Mathematical Society, \text{289}~(2002).



\bibitem[St1]{St1}
K.T. Sturm, \emph{Analysis on local Dirichlet spaces. I. Recurrence, conservativeness and $L\sp p$-Liouville properties},  J. Reine Angew. Math.,  \textbf{456}~(1994), 173--196.

\bibitem[St2]{St2} 
\bysame, \emph{Analysis on local Dirichlet spaces. II. Upper Gaussian estimates for the fundamental solutions of parabolic equations},
Osaka J. Math., \textbf{32}~(1995), no. 2, 275--312.

\bibitem[St3]{St3} 
\bysame, \emph{Analysis on local Dirichlet spaces. III. The parabolic Harnack inequality},
J. Math. Pures Appl. (9) \textbf{75}~(1996), no. 3, 273--297. 



 
\bibitem[V]{V}
N. Th. Varopoulos, \emph{Fonctions harmoniques sur les groupes de Lie}, (French) [Harmonic functions on Lie groups] C. R. Acad. Sci. Paris S\'er. I Math. \textbf{304}~(1987), no. 17, 519-521. 


\bibitem[VSC]{VSC} 
N. Varopoulos, L. Saloff-Coste \& T. Coulhon, \emph{Analysis and geometry on groups}, 
Cambridge Tracts in Mathematics, 100. Cambridge University Press, Cambridge, 1992. xii+156 pp. ISBN: 0-521-35382-3 

\bibitem[Y1]{Yau} 
S.T. Yau, \emph{Harmonic functions on complete Riemannian manifolds},
Comm. Pure Appl. Math. \textbf{28}~(1975), 201--228.



\bibitem[Y3]{Yau3}
\bysame, \emph{On the heat kernel of a complete Riemannian manifold}, 
J. Math. Pures Appl. (9) \textbf{57}~(1978), no. 2, 191-201. 


\end{thebibliography}
\end{document}